\newtheorem{theorem}{Theorem}
\newtheorem{lemma}[theorem]{Lemma}
\newtheorem{corollary}[theorem]{Corollary}
\theoremstyle{definition}
\newtheorem{definition}[theorem]{Definition}
\newtheorem{example}[theorem]{Example}
\theoremstyle{remark}
\newtheorem{remark}[theorem]{Remark}
\numberwithin{equation}{section}
\DeclareMathOperator{\Long}{Length}
\DeclareMathOperator{\Area}{Area}
\newcommand{\df}{\mathrm{d}}
\begin{document}
\title[Generalized Calabi correspondence and complete spacelike surfaces]{
Generalized Calabi correspondence and \\complete spacelike surfaces}

\author{Hojoo Lee}
\address{Hojoo Lee, Korea Institute for Advanced Study, Hoegiro 87, Dongdaemun-gu, Seoul, 02455, Korea.}
\email{momentmaplee@gmail.com}
 
\author{Jos\'{e} M. Manzano}
\address{Jos\'{e} M. Manzano, Department of Mathematics, King's College London, Strand WC2R 2LS, United Kingdom.}
\email{manzanoprego@gmail.com}

\thanks{2010 MSC: Primary 49Q05, 53A10; Secondary 53C50, 35B08}

\begin{abstract}
We construct a twin correspondence between graphs with prescribed mean curvature in three-dimensional Riemannian Killing submersions and spacelike graphs with prescribed mean curvature in three-dimensional Lorentzian Killing submersions. Our duality extends the Calabi correspondence between minimal graphs in the Euclidean space ${\mathbb{R}}^{3}$ and maximal graphs in the Lorentz--Minkowski spacetime ${\mathbb{L}}^{3}$, by allowing arbitrary prescribed mean curvature and bundle curvature. For instance, we transform the prescribed mean curvature equation in ${\mathbb{L}}^{3}$ into the minimal surface equation in the generalized Heisenberg space with prescribed bundle curvature. We present several applications of the twin correspondence to the study of the moduli space of complete spacelike surfaces in certain Lorentzian spacetimes.
\end{abstract}

\maketitle

\section{Introduction} \label{intro}

Maximal submanifolds in Lorentzian manifolds are spacelike submanifolds with vanishing mean curvature, and arise naturally as critical points of the area functional within the class of spacelike submanifolds. They have played a significant role in geometric analysis, as shown, for instance, by Schoen and Yau's proof of the Positive Mass Theorem, or by the analysis of solutions of the Einstein--Yang--Mills equation. Existence, uniqueness, and regularity of maximal (and, more generally, constant mean curvature) submanifolds in different spacetimes have become important problems in both mathematical relativity and differential geometry~\cite{BS82}. The main goal of this paper is to prove that there is a large family of Lorentzian Killing three-manifolds with timelike unit Killing vector fields not admitting complete spacelike surfaces.
 
In 1970, Calabi \cite{Cal70} proved the remarkable result that the only entire maximal graphs defined over the whole $xy$-plane in the Lorentz--Minkowski space ${\mathbb{L}}^{3}=({\mathbb{R}}^{3},\df x^2 + \df y^2 -\df z^2)$ are the spacelike affine planes. This implies that spacelike planes are the only complete maximal surfaces in ${\mathbb{L}}^{3}$, which can be considered the Lorentzian counterpart of Bernstein's beautiful result that the only entire minimal graphs in the Euclidean space ${\mathbb{E}}^{3}=({\mathbb{R}}^{3},\df x^2 + \df y^2 +\df z^2)$ are the affine planes, though ${\mathbb{E}}^{3}$ admits many other complete non-planar minimal surfaces. 

Cheng and Yau \cite{CY76} extended Calabi's result to higher dimensions by proving that the only complete maximal hypersurfaces in Lorentz--Minkowski space ${\mathbb{L}}^{n+1}$ with $n+1 \geq 3$ are the spacelike hyperplanes. Nonetheless, Bombieri, De Giorgi and Giusti \cite{BGG69} disproved the Riemannian analogue by showing the existence of entire non-planar minimal hypersurfaces in the Euclidean space ${\mathbb{E}}^{n+1}$ with $n+1 \geq 9$.

During the last few decades, a considerable attention has been paid to constant mean curvature surfaces in more general Lorentzian spaces admitting Killing or conformally Killing timelike directions:
\begin{itemize}
	\item Fern\'{a}ndez and Mira constructed a large family of complete entire maximal surfaces in the static Robertson-Walker spacetime $\mathbb H^2\times \mathbb{R}$ endowed with the usual Lorentzian product metric~\cite[Theorem 3.1]{FM07} (notice that entire graphs are not necessarily complete, see~\cite[Examples 3.1 and 3.3]{Alb08}). On the contrary, Albujer and Al\'{i}as \cite[Theorem 3.3]{AA09} proved that, given an arbitrary Riemannian surface $M$ with Gaussian curvature $K_M\geq 0$, complete maximal surfaces in $M \times \mathbb{R}$ (endowed with the usual Lorentzian product metric) are totally geodesic. Furthermore, they show that if $K_M$ does not vanish identically, then $\Sigma$ must be a (spacelike) horizontal slice $M\times \{t_{0}\}$ for some $t_{0} \in \mathbb{R}$.
	\item Generalized Robertson-Walker (GWR) spacetimes also represent ambient spaces of increasing interest~\cite{Mon88, RR10b}. Given a smooth function $\phi:\mathbb{R} \rightarrow (0, +\infty)$ and a Riemannian manifold $M$, the GRW spacetime $M {\times}_{\phi} \mathbb{R}$ is defined as $M\times \mathbb{R}$ equipped with the Lorentzian warped metric ${\phi(z)}^{2}{\langle\cdot,\cdot\rangle}_M-\df z^{2}$. These are conformally stationary spacetimes, i.e., they are time-orientable spacetimes admitting a timelike conformal vector field $\phi(z)\,\partial_z$. For instance, see Montiel's observation \cite[Section 3]{Mon88}. Several existence results of spacelike immersions in conformally stationary spacetimes are also known \cite{JS08, Har92}.
\end{itemize}

Calabi's original proof \cite{Cal70} of his Bernstein type result relies on an interesting duality between minimal graphs in ${\mathbb{E}}^{3}$ and maximal graphs in ${\mathbb{L}}^{3}$. This twin correspondence extends to a correspondence between graphs of constant mean curvature $H$ in the Riemannian Bianchi--Cartan--Vranceanu (BCV) space ${\mathbb{E}}^{3}(\kappa, \tau)$ and spacelike graphs of constant mean curvature $\tau$ in the Lorentzian BCV space ${\mathbb{L}}^{3}(\kappa, H)$, see~\cite{Lee12} and Corollary~\ref{coro:bcv} below, as well as the explicit definition of $\mathbb{E}(\kappa,\tau)$ and $\mathbb{L}(\kappa, H)$ in Remark~\ref{BCV-twisted} below. The importance of these 2-parameter families in the literature comes from the fact that they model all homogeneous 3-manifolds with isometry group of dimension $4$, which are the most symmetric 3-manifolds after the space-forms. In the particular case $\tau=H=0$, the duality reduces to the Albujer--Al\'{i}as duality \cite{AA09}. It is worth mentioning that there are also extensions to higher codimension, see~\cite{Lee12}.

Since the twin correspondence in BCV spaces sends entire graphs to entire graphs, it becomes a natural and useful tool for studying Bernstein--Calabi type problems. For instance, entire spacelike graphs of constant mean curvature $\frac{1}{2}$ in ${\mathbb{L}}^{3}={\mathbb{L}}^{3}(0,0)$ correspond to entire minimal graphs in the Heisenberg space $\mathrm{Nil}^3=\mathbb{E}^{3}(0,\frac{1}{2})$. As the moduli space of entire spacelike graphs of constant mean curvature $\frac{1}{2}$ in $\mathbb{L}^3$ is large (Treibergs \cite[Theorem 2]{Tr82a} showed that the asymptotic behaviour of such graphs can be an arbitrary $\mathcal C^2$-perturbation of the light cone in $\mathbb{L}^3$), it follows that there exist many entire minimal graphs in $\mathrm{Nil}^3$. 
A description of the moduli space of entire minimal graphs in $\mathrm{Nil}^3$ was determined by Fern\'{a}ndez and Mira~\cite{FM07} in terms of holomorphic quadratic differentials.

In this paper, we generalize the Calabi correspondence to the case of three-manifolds admitting a unit Killing vector field, and, more specifically, to the case of unit Killing submersions over a non-compact simply connected Riemannian surface $M$. A submersion $\pi:\mathbb E\to M$ from an orientable Lorentzian or Riemannian three-manifold $\mathbb{E}$ is said Killing if it preserves the length of horizontal vectors and its fibers are the integral curves of a unit Killing vector field $\xi$ in $\mathbb{E}$~\cite{EO,LerMan}. The metric in $\mathbb E$ is Riemannian in the two-dimensional horizontal distribution orthogonal to $\xi$, but $\xi$ is a timelike vector field if $\mathbb E$ is assumed Lorentzian. 

There is a unique function $\tau\in\mathcal C^\infty(M)$ satisfying
\begin{equation}\label{eqn:bundle}
\overline\nabla_X\xi=\tau X\times\xi,\qquad \text{for all vector fields }X\text{ in } \mathbb E,
\end{equation}
where $\overline\nabla$ is the Levi-Civita connection in $\mathbb E$, and it is called the bundle curvature of $\pi$ (existence essentially follows from the fact that the curvature 2-form $\alpha(X,Y)=\langle\overline\nabla_X\xi,Y\rangle$ is skew-symmetric since $\xi$ is Killing). The bundle curvature accounts for the non-integrability of the horizontal distribution, in the sense that $\tau\equiv 0$ if and only if the horizontal distribution is integrable, see~\cite{EO,Man12b}. The cross product in~\eqref{eqn:bundle} is defined as $\langle Y, X\times \xi\rangle_{\mathbb E}=\det(Y,X,\xi)$ for all vector fields $X,Y$ in $\mathbb{E}$, where the determinant is computed with respect to the volume form determined by the orientation and metric in $\mathbb E$. It is important to note that (the sign of) the bundle curvature depends on the choice of the orientation of the total space.

Given a non-compact simply connected Riemannian surface $M$ and $\tau\in \mathcal{C}^\infty(M)$, there exists a unique Riemannian (resp. Lorentzian) Killing submersion over $M$ with bundle curvature $\tau$ whose fibers have infinite length. The Riemannian case follows from~\cite{Man12b} and the Lorentzian case is analogous. The total space of such a Killing submersion will be denoted by $\mathbb E^3(M,\tau)$ (resp. $\mathbb L^3(M,\tau)$). If $M$ is the simply connected surface with constant curvature $\kappa\in\mathbb R$ and the bundle curvature $\tau$ is also constant, then $\mathbb E^3(M,\tau)=\mathbb E^3(\kappa,\tau)$ and $\mathbb L^3(M,\tau)=\mathbb L^3(\kappa,\tau)$, so these three-manifolds can be interpreted as generalized Bianchi--Cartan--Vranceanu spaces. 

Explicit expressions for the metrics of $\mathbb E^3(M,\tau)$ and $\mathbb L^3(M,\tau)$ will be given in Section~\ref{GBCVs} in terms of $\tau$ and a conformal parametrization of $M$. Section~\ref{MTone} is devoted to establishing our Calabi type twin correspondence, which is involutive up to translations along the fibers, and preserves the conformal type (see Theorem~\ref{TCb} below). This correspondence applies to simply connected graphs (i.e., sections of the submersion) and swaps the mean and bundle curvatures: it relates graphs with prescribed mean curvature $H$ in $\mathbb{E}^{3}(M,\tau)$ and spacelike graphs with prescribed mean curvature $\tau$ in $\mathbb{L}^{3}(M,H)$. Notice that the base surface does not change in the correspondence. Its proof is based on Poincar\'{e} Lemma and the fact that both $\tau$ and $H$ admit divergence-form expressions (see Lemma~\ref{lemma:H} below). 

When the curvature of the base curvature, bundle curvature, and mean curvature are all constant, we recover the twin correspondence in the classical BCV spaces~\cite[Theorem 2]{Lee11a}. It is worth mentioning that Killing submersions do not admit warped-product structures in general, so our correspondence is not related to the correspondence constructed by G\'{a}lvez, Jim\'{e}nez and Mira \cite{JimGalMir} for isometric immersions in warped products.

In Section \ref{MTapp}, we will employ our duality to investigate complete spacelike surfaces in Lorentzian Killing submersions. Although completeness of entire spacelike graphs in Lorentzian manifolds is not guaranteed in general~\cite{Alb08}, we establish that complete spacelike surfaces in $\mathbb{L}^{3}(M,\tau)$ must be entire graphs. In particular, the study of complete spacelike surfaces in $\mathbb{L}^{3}(M,\tau)$ reduces to that of the entire spacelike solutions of the corresponding prescribed mean curvature equation, where classical PDE techniques can be applied. We remark that complete spacelike surfaces in $\mathbb{L}^{3}(M,\tau)$ are in correspondence with spacelike foliations via the one-parameter group of isometries associated with the vertical Killing vector field.

We prove that there is a large family of Lorentzian Killing submersions not admitting complete spacelike surfaces. This is achieved by means of a sharp bound on the mean curvature function of an entire graph in the Riemannian case.  More explicitly, we prove that if the bundle curvature $\tau$ of $\mathbb{L}^{3}(M,\tau)$ satisfies $\inf_M |\tau| >\tfrac{1}{2}\mathrm{Ch}(M)$, where $\mathrm{Ch}(M)$ denotes the Cheeger constant of the non-compact simply connected base $M$ (see Definition~\ref{defi:cheeger} below), then $\mathbb{L}^{3}(M,\tau)$ admits no complete spacelike surfaces. By using a comparison result for Cheeger constants, we get that, if the infimum of the Gaussian curvature of $M$ is a real number $c\leq 0$, and $\inf_M |\tau| >\tfrac{1}{2}\sqrt{-c}$, then $\mathbb{L}^{3}(M,\tau)$ does not admit complete spacelike surfaces, either. It is important to remark that there is no assumption on the mean curvature of the complete spacelike surface.
 
As a particular case, we deduce that there do not exist complete spacelike surfaces in $\mathbb{L}^{3}\left(\kappa,\tau\right)$ with $\kappa\leq0$ and $|\tau|>\frac{1}{2}\sqrt{-\kappa}$. Moreover, we show that the lower bound $\frac{1}{2}\sqrt{-\kappa}$ is sharp, for the moduli space of complete maximal surfaces in the anti--de Sitter spacetime $\mathbb{L}^{3}(\kappa,\frac{1}{2}\sqrt{-\kappa})$ is large.  In fact, we prove that a maximal surface in the anti--de Sitter spacetime $\mathbb{L}^{3}(\kappa,\frac{1}{2}\sqrt{-\kappa})$ is complete if and only if it is an entire spacelike graph defined over the whole hyperbolic base ${\mathbb{H}}^{2}(\kappa)$. This is an extension of Cheng and Yau's result~\cite{CY76} that entire constant mean curvature spacelike graphs in ${\mathbb{L}}^{3}=\mathbb L^3(0,0)$ are complete.

To conclude, we interpret our non-existence result in the relativistic sense of causality by proving that $\mathbb{L}^{3}(M,\tau)$ is not a \textit{distinguishing} spacetime provided that $\inf_M |\tau| >\tfrac{1}{2}\mathrm{Ch}(M)$.

\section{Preliminaries on Killing submersions} \label{GBCVs}

We will begin by defining models for the Killing submersions in terms of the bundle curvature. Given a non-compact and simply connected surface $M$, we can conformally parametrize $M=(\Omega,\delta^{-2}(x,y)(\df x^2+\df y^2))$, where $\Omega\subset\mathbb R^2$ is a disk or the whole plane, and $\delta\in\mathcal C^\infty(\Omega)$ is some positive function. Since the definitions and results below make sense in the more general case $\Omega$ is star-shaped with respect to the origin, we will assume this condition in the sequel.

\begin{definition}[\textbf{Calabi potential}]
Given $\tau\in\mathcal C^2(\Omega)$, we will call \textit{Calabi potential} the function $\mathbf{C}_{\delta,\tau}\in\mathcal C^2(\Omega)$ defined by 
\begin{equation} \label{Cpotential01}
 \mathbf{C}_{\delta,\tau}(x,y)
  = 2\int_{0}^{1}  t\, \frac{ \tau( t x, t y) }
  { {\delta(t x,t y)}^{2}} \, dt,
\end{equation}
\end{definition}

This definition is inspired by the explicit metrics given in~\cite{Man12b} in the Riemannian setting, which easily extend to the Lorentzian case. By taking derivatives under the integral sign, it is easy to check that $\mathbf{C}_{\delta,\tau}$ fulfils the divergence-form equation
\begin{equation}\label{eqn:calabi-divergence}
  \frac{2\tau}{{\delta}^{2}} = \frac{\partial}{\partial x} \left(x\,{\mathbf{C}}_{{\delta, \tau}}   \right) + \frac{\partial}{\partial y}
  \left(  y\,{\mathbf{C}}_{{\delta, \tau}}   \right),
\end{equation}
which plays a fundamental role in the proof of the Calabi type correspondence. 

We define $\mathbb E^3(M,\tau)$ and $\mathbb L^3(M,\tau)$ as the three-dimensional product space $\Omega\times\mathbb R$ endowed with the metric
 \begin{equation} \label{Rmetric}
  \frac{1}{{\delta(x,y)}^{2}} (\df x^2 + \df y^2)
   + \epsilon\left( \df z + \epsilon\, \mathbf{C}_{\delta,\tau}(x,y) \left( y\,\df x - x\,\df y  \right) \,
   \right)^{2}.
 \end{equation}
with $\epsilon=1$ in the case of $\mathbb E^3(M,\tau)$ and $\epsilon=-1$ in the case of $\mathbb L^3(M,\tau)$. 

The natural projection $\pi:\Omega\times\mathbb R\to \Omega$ given by $\pi(x,y,z)=(x,y)$ is the only Killing submersion with bundle curvature $\tau$ over $M$ when the above metrics in $\Omega\times\mathbb R$ are considered. The Killing vector field $\xi$ is spacelike if and only if $\epsilon=1$. The proof in the Riemannian case follows from~\cite{Man12b}, and can be easily extended to the Lorentzian setting. Hence the Calabi potential can be thought as a way of recovering the metric of the total space in terms of its bundle curvature.

\begin{remark}[\textbf{Killing submersions as extensions of product and BCV spaces}]\label{BCV-twisted}
If the bundle curvature function $\tau$ vanishes identically, then $\mathbf{C}_{\delta,0}\equiv 0$. We get the spaces $\mathbb{E}^{3}(M,0)=M \times \mathbb{R}$ with the Riemannian product metric ${\langle\cdot,\cdot\rangle}_{M}+\df z^{2}$, and $\mathbb{L}^{3}(M,0)= M\times \mathbb{R}$ with the Lorentzian product metric ${\langle\cdot,\cdot\rangle}_M-\df z^{2}$. A more geometric interpretation of the bundle curvature $\tau$ can be found in \cite[Proposition~3.3]{Man12b}. 

On the other hand, classical BCV spaces have constant base curvature and constant bundle curvature. Given $\kappa\in\mathbb R$, let $\delta(x,y)=1+\frac{\kappa}{4}(x^2+y^2)$, which is positive in $\Omega=\mathbb R^2$ for $\kappa\geq 0$ or $\Omega=\{(x,y)\in\mathbb R^2:x^2+y^2<\frac{-4}{\kappa}\}$ for $\kappa<0$. Then the metric $g=\delta(x,y)^{-2} (\df x^2 + \df y^2)$ on $\Omega$ has constant curvature $\kappa$, and $M=(\Omega,g)$ is isometric to $\mathbb E^2$ (for $\kappa=0$), $\mathbb H^2(\kappa)$ (for $\kappa<0$), or $\mathbb S^2(\kappa)$ minus a point (for $\kappa>0$). When $\tau(x,y)$ is constant, the associated Calabi potential is nothing but $\mathbf{C}_{\delta,\tau}(x,y)=\tau\,\delta(x,y)^{-1}$, so we get the classical BCV spaces ${\mathbb {E}}^{3}(\kappa,\tau)$ and ${\mathbb{L}}^{3}(\kappa,\tau)$, see~\cite{Dan07}. We will use the classical notation ${\mathbb {E}}^{3}(\kappa,\tau)=\mathbb{E}^{3}({\mathbb{M}}^{2}(\kappa),\tau)$ and ${\mathbb {L}}^{3}(\kappa,\tau)=\mathbb{L}^{3}({\mathbb{M}}^{2}(\kappa),\tau)$, where  ${\mathbb{M}}^{2}(\kappa)$ denotes the aforesaid surface with constant curvature $\kappa$.
\end{remark}

Given $\epsilon\in\{-1,1\}$, a global orthonormal frame $\{E_1,E_2,E_3\}$ in $\Omega\times\mathbb R$ endowed with the metric~\eqref{eqn:calabi-divergence} is given by
\begin{equation}\label{eqn:frame}
   E_{1}= \delta  \left(  {\partial_x} - \epsilon\, y\, {\mathbf{C}}_{{\delta, \tau}}  {\partial_z}  \right),\qquad
  E_{2}= \delta  \left(  {\partial_y} +\epsilon\,x\, {\mathbf{C}}_{{\delta, \tau}}    {\partial_z}  \right),\qquad
 E_{3}=   {\partial_z}.
\end{equation}
This frame will be assumed positively oriented without loss of generality. We recall that an orientation of the total space is needed to define the bundle curvature.

\section{Twin correspondences} \label{MTone}

We will now derive formulas for the mean curvature of a vertical graph in $\mathbb{E}^3(M,\tau)$ and $\mathbb{L}^3(M,\tau)$. Using the notation of the previous section, the graph of a function $u\in\mathcal C^2(\Omega')$, where $\Omega'\subseteq\Omega$ is an open subset, is the surface 
\[\left\{(x,y,z)\in\Omega'\times\mathbb R:z=u(x,y)\right\}.\]
In other words, we consider graphs over the global zero-section $z=0$. The graph is said to be \textit{entire} when $\Omega'=\Omega$. In the Lorentzian case, the graph is said  to be \textit{spacelike} if the induced metric from the ambient manifold is Riemannian. 

The following lemma can be easily deduced.

\begin{lemma}[\textbf{Mean curvature of a graph}] \label{lemma:H}
Let $u\in\mathcal C^2(\Omega')$ denote the height function of a graph defined on some open subset $\Omega'\subseteq\Omega$. 
 \begin{enumerate}
 \item The mean curvature $H$ of the graph of $u$ in $\mathbb{E}^{3}(M, \tau)$ satisfies
\begin{equation}\label{eq:H}
2 H=\delta^2\left(\frac{\partial}{\partial
x}\left(\frac{\alpha}{\omega}\right)+\frac{\partial}{\partial
y}\left(\frac{\beta}{\omega}\right)\right)=\mathrm{div}_{M}\left(\frac{G}{\sqrt{1+{\|G\|}_{M}^2}}\right),
\end{equation}
where $\alpha=u_x+y\,\mathbf{C}_{\delta,\tau}$,
$\beta=u_y-x\,\mathbf{C}_{\delta,\tau}$,
$\omega=\sqrt{1+\delta^2(\alpha^2+\beta^2)}$, and the vector field $G$ on $\Omega'\subset M$ is given by $G=\delta^2(\alpha\partial_x+\beta \partial_y)$.

\item If the graph of $u$ is spacelike in $\mathbb{L}^{3}(M,\widetilde{\tau})$, then its mean curvature $\widetilde H$ satisfies
\begin{equation}\label{eq:H-lorentz}
2\widetilde H=\delta^2\left(\frac{\partial}{\partial
x}\left(\frac{\widetilde\alpha}{\widetilde\omega}\right)+\frac{\partial}{\partial
y}\left(\frac{\widetilde\beta}{\widetilde\omega}\right)\right)=\mathrm{div}_{M}\left(\frac{\widetilde{G}}{\sqrt{1- {\|\widetilde{G}\|}_M^2}}\right),
\end{equation}
where
$\widetilde\alpha=u_x-y\,\mathbf{C}_{\delta,\widetilde{\tau}}$,
$\widetilde\beta=u_y+x\,\mathbf{C}_{\delta,\widetilde{\tau}}$,
 $\widetilde{\omega} = \sqrt{ 1 - {{\delta}}^{2} ( {\widetilde{\alpha}}^2 + { \widetilde{\beta}}^2
 )}$, and the vector field $\widetilde{G}$ on $\Omega'\subset M$ is given by $\widetilde G=\delta^2(\widetilde{\alpha} {\partial_x}+\widetilde{\beta}{\partial_y})$. The graph of $u$ is spacelike in $\mathbb{L}^{3}(M,
  \widetilde{\tau})$ if and only if
\[1-{\|\widetilde{G}\|}_{M}^{2} = 1 - {{\delta}}^{2} ( {\widetilde{\alpha}}^2 + { \widetilde{\beta}}^2)>0.\]
 \end{enumerate}
\end{lemma}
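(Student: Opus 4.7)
The plan is to derive both formulas from the first variation of area, using the orthonormal frames $\{E_1,E_2,E_3\}$ and $\{L_1,L_2,L_3\}$ from Remark~\ref{BCV-twisted} to linearize the twisted metric. The divergence-form expressions on the right-hand sides of \eqref{eq:H} and \eqref{eq:H-lorentz} will then follow from the identity $\mathrm{div}_M(V)=\delta^2\,\partial_i(V^i/\delta^2)$, valid in the conformal coordinates on $M$ because $\sqrt{|g|}=\delta^{-2}$, together with the direct checks $G/\omega=(\delta^2\alpha/\omega)\partial_x+(\delta^2\beta/\omega)\partial_y$ and $\|G\|_M^2=\delta^2(\alpha^2+\beta^2)=\omega^2-1$ (and the analogous identities in the Lorentzian case).

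For the Riemannian case, I would parametrize the graph as $\Phi(x,y)=(x,y,u(x,y))$. Inverting the frame relations gives $\partial_x=\delta^{-1}E_1+y\mathbf{C}_{\delta,\tau}E_3$ and $\partial_y=\delta^{-1}E_2-x\mathbf{C}_{\delta,\tau}E_3$, so the tangent vectors read $\Phi_x=\delta^{-1}E_1+\alpha E_3$ and $\Phi_y=\delta^{-1}E_2+\beta E_3$. A routine computation yields the induced metric determinant $\omega^2/\delta^4$ and hence area element $(\omega/\delta^2)\,dx\,dy$; imposing orthogonality and $|N|=1$ produces the unit normal $N=\omega^{-1}(-\delta\alpha E_1-\delta\beta E_2+E_3)$, for which $\langle\partial_z,N\rangle=1/\omega$. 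Differentiating $\mathcal{A}(u+tv)=\int(\omega/\delta^2)\,dx\,dy$ at $t=0$ for compactly supported $v$ and integrating by parts gives
\[
\frac{d}{dt}\Big|_{t=0}\mathcal{A}(u+tv)=\int\frac{\alpha v_x+\beta v_y}{\omega}\,dx\,dy=-\int v\left(\partial_x(\alpha/\omega)+\partial_y(\beta/\omega)\right)dx\,dy.
\]
Matching this with the standard first-variation formula $-2\int H\langle v\partial_z,N\rangle\,dA_\Sigma=-\int(2Hv/\delta^2)\,dx\,dy$ and letting $v$ vary freely yields \eqref{eq:H}.

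The Lorentzian case runs along the same lines: using $\partial_x=\delta^{-1}L_1-y\mathbf{C}_{\delta,\widetilde\tau}L_3$ and $\partial_y=\delta^{-1}L_2+x\mathbf{C}_{\delta,\widetilde\tau}L_3$, one finds $\Phi_x=\delta^{-1}L_1+\widetilde\alpha L_3$ and $\Phi_y=\delta^{-1}L_2+\widetilde\beta L_3$; the induced metric has determinant $\widetilde\omega^2/\delta^4$, which is positive precisely when the graph is spacelike, proving the characterization $1-\|\widetilde G\|_M^2>0$. The timelike unit normal becomes $\widetilde N=\widetilde\omega^{-1}(\delta\widetilde\alpha L_1+\delta\widetilde\beta L_2+L_3)$, with $\langle\partial_z,\widetilde N\rangle=-1/\widetilde\omega$ since $L_3$ is timelike. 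Repeating the variational computation, $\partial_t(\widetilde\omega/\delta^2)$ now carries an extra minus sign coming from the Lorentzian signature inside $\widetilde\omega^2=1-\delta^2(\widetilde\alpha^2+\widetilde\beta^2)$, and this sign cancels exactly with the minus sign in $\langle\partial_z,\widetilde N\rangle$, leaving \eqref{eq:H-lorentz}. The one delicate point is precisely this sign bookkeeping—two independent Lorentzian minus signs must combine to give the formula rather than its negative—while the Calabi potential itself enters only through the definitions of $\alpha,\beta$ (resp.~$\widetilde\alpha,\widetilde\beta$), so the divergence identity \eqref{eqn:calabi-divergence} is not invoked at this stage.
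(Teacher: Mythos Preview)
Your argument is correct. The paper itself gives no proof of this lemma: immediately before the statement it simply says ``The following lemma can be easily deduced,'' and then moves on. So there is nothing to compare your approach against; you have supplied what the authors omitted.

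Your variational derivation is clean and the computations check out. In particular: the frame inversion $\partial_x=\delta^{-1}E_1+y\mathbf{C}_{\delta,\tau}E_3$ (and its Lorentzian analogue) is right, the induced-metric determinant $\omega^2/\delta^4$ is right, and the identification $\mathrm{div}_M(V)=\delta^2\partial_i(\delta^{-2}V^i)$ together with $\|G\|_M^2=\delta^2(\alpha^2+\beta^2)$ correctly converts the coordinate expression into the intrinsic divergence form. Your Lorentzian normal $\widetilde N=\widetilde\omega^{-1}(\delta\widetilde\alpha L_1+\delta\widetilde\beta L_2+L_3)$ is the one that is actually orthogonal to $\Phi_x,\Phi_y$ (the formula for $\mathcal N_g$ in the paper's Remark after Theorem~\ref{TCb} appears to carry a sign typo in the $L_1,L_2$ components), and your observation that the two Lorentzian minus signs---one from $\partial_t\widetilde\omega$ and one from $\langle L_3,L_3\rangle=-1$---cancel is exactly the point that makes \eqref{eq:H-lorentz} come out with the stated sign.

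An alternative route, equally standard, would be to compute $H$ directly from $2H=\mathrm{tr}_\Sigma(\nabla N)$ using the frame and the connection coefficients, but your first-variation method avoids that bookkeeping and is arguably the most efficient way to land on a divergence-form expression.
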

 
\begin{example}[\textbf{Helicoids in spaces with rotational symmetry}] 
A vertical translation in a Killing submersion is an element of the $1$-parameter group of isometries associated with the unit Killing vector field. Unless there is an isometry of the base surface that leaves the bundle curvature invariant or changes its sign, vertical translations are the only isometries in the total space of the Killing submersion, see~\cite{Man12b}.

If assume that both the conformal factor and the bundle curvature are radial with respect to the origin (i.e., $\delta(x,y)$ and $\tau(x,y)$ are functions of $x^2+y^2$), then the induced Calabi potential ${\mathbf{C}}_{{\delta, \tau}}$ is also radial. Given $\mu_1,\mu_2\in\mathbb R$, let us consider (with respect to the metric given by~\eqref{Rmetric}) the \textit{helicoid} 
\[{\mathcal{H}}_{{\mu}_{1}, {\mu}_{2}}=\{(\rho\cos(\theta),\rho\sin(\theta),\mu_1\theta+\mu_2):\rho,\theta\in\mathbb{R}\}\]
Each level curve ${\mathcal{H}}_{{\mu}_{1}, {\mu}_{2}} \cap \{z=z_{0}\}$ is an ambient geodesic, and symmetries about these geodesics are ambient isometries. This implies that $\mathcal H_{\mu_1,\mu_2}$ is a \textit{ruled} surface with zero mean curvature provided that $\delta$ and $\tau$ are radially symmetric. Note that $\mathcal H_{\mu_1,\mu_2}$ is a horizontal plane for $\mu_1=0$ and converges to a vertical plane when $\mu_1\to\infty$. Except at the axis $x=y=0$, $\mathcal H_{\mu_1,\mu_2}$ can be expressed locally as the graph
\[z=\mu_1\arctan\left(\frac{y}{x}\right)+\mu_2.\]
It is worth mentioning that the only ruled minimal surfaces in the Heisenberg space 
$\mathrm{Nil}^3=\mathbb{E}^{3}(0, \frac{1}{2})$ are (subsets of) planes, helicoids, and hyperbolic paraboloids up to ambient isometries \cite[Theorem 2.3]{KKLSY}. It would be interesting to extend this characterization to Killing submersions with rotational symmetry.
\end{example}

We now state our Calabi type correspondence between graphs with prescribed mean curvature $H$ in the Riemannian space $\mathbb{E}^{3}(M,\tau)$ and spacelike graphs with prescribed mean curvature $\tau$ in the Lorentzian space $\mathbb{E}^{3}(M,H)$.

\begin{theorem} [\textbf{Twin correspondence}] \label{TCb} 
Let $M$ be an open domain $\Omega\subseteq\mathbb R^2$, star-shaped with respect to the origin, endowed with the metric $\delta^{-2}(\df x^2+\df y^2)$ for some positive $\delta\in\mathcal C^\infty(\Omega)$. Given $\tau,H\in\mathcal C^2(\Omega)$ and a simply connected open domain  $\Omega'\subseteq\Omega$, then:
\begin{itemize}
\item[\textbf{(a)}]  If the graph of a function $f\in\mathcal C^2(\Omega')$ in $\mathbb{E}^{3}(M, \tau)$ has mean curvature $H$, then there exists $g\in \mathcal C^2(\Omega')$ such that
 the graph of $g$ is spacelike in $\mathbb{L}^{3}(M, H)$ and has mean curvature $\tau$.

\item[\textbf{(b)}]  If the graph of a function $g\in\mathcal C^2(\Omega')$ in $\mathbb{L}^{3}(M, H)$ is spacelike and has mean curvature $\tau$, then there exists $f \in\mathcal C^2(\Omega')$ such that the graph of $f$ in $\mathbb{E}^{3}(M, \tau)$ has mean curvature $H$.
\end{itemize}
The height functions of graphs in \textbf{(a)}  and \textbf{(b)} can be chosen to satisfy the twin relations:
\begin{equation}\label{twin}
\left( \widetilde{\alpha} , \widetilde{\beta} \right) = \left( - \frac{
\beta }{ \omega }, \frac{ \alpha }{ \omega} \right), \quad\text{or
equivalently,} \quad \left( \alpha , \beta \right) = \left( \frac{
\widetilde{\beta} }{ \widetilde{\omega} }, - \frac{ \widetilde{\alpha}
}{\widetilde{\omega} } \right),
\end{equation}
where
\begin{align*}
\left( \alpha, \beta \right)&= \left(
 f_{x} + y\, {\mathbf{C}}_{{\delta, \tau}}, f_{y} - x\, {\mathbf{C}}_{{\delta, \tau}} \right), &
 \omega&= \sqrt{ 1+ {{\delta}}^{2} \bigl( {\alpha}^2 + {\beta}^2 \bigr)},
\\
\bigl( \widetilde{\alpha}, \widetilde{\beta} \bigr) &= \left(
 g_{x} - y\, {\mathbf{C}}_{{\delta, H}}, g_{y} + x\, {\mathbf{C}}_{{\delta, H}} \right), &
 \widetilde{\omega}&= \sqrt{ 1 - {{\delta}}^{2} \bigl( {\widetilde{\alpha}}^2 + { \widetilde{\beta}}^2 \bigr)}.
\end{align*}
If we parametrize the graphs as $F(x,y)=(x,y,f(x,y))$ and $G(x,y)=(x,y,g(x,y))$, $(x,y)\in\Omega'$, then the map $F(x,y)\mapsto G(x,y)$ is a conformal diffeomorphism. More explicitly, if ${\mathbf{I}}_{F}$  and $\widetilde{\mathbf{I}}_{G}$ denote the induced metrics on the graphs $z=f(x,y)$ in $\mathbb E^3(M,\tau)$ and $z=g(x,y)$ in $\mathbb L^3(M,H)$, respectively, then
\begin{equation}
 { \widetilde{\mathbf{I}} }_{G}= \frac{1}{{\omega}^2}\,{\mathbf{I}}_{F}, \;\; \text{or
equivalently,}  \;\; {{\mathbf{I}} }_{F}= \frac{1}{{ \widetilde{\omega}}^2}\,{ \widetilde{\mathbf{I}} }_{G}, 
 \;\; \text{or symmetrically,}  \;\;  \frac{1}{{\omega}}\,{\mathbf{I}}_{F} = \frac{1}{{ \widetilde{\omega}}}\,{ \widetilde{\mathbf{I}} }_{G}.
\end{equation}
\end{theorem}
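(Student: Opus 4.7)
The plan is to adopt the twin relations themselves as the defining equations for $g$ in terms of $f$: set $\widetilde\alpha:=-\beta/\omega$ and $\widetilde\beta:=\alpha/\omega$, and then look for $g\in\mathcal C^2(\Omega')$ satisfying $g_x-y\,\mathbf{C}_{\delta,H}=\widetilde\alpha$ and $g_y+x\,\mathbf{C}_{\delta,H}=\widetilde\beta$. This amounts to asking that the 1-form
$$\bigl(\widetilde\alpha + y\,\mathbf{C}_{\delta,H}\bigr)\,dx + \bigl(\widetilde\beta - x\,\mathbf{C}_{\delta,H}\bigr)\,dy$$
be exact on $\Omega'$; since $\Omega'$ is simply-connected, it suffices to verify closedness, i.e., the single identity $\partial_x\widetilde\beta-\partial_y\widetilde\alpha=\partial_x(x\,\mathbf C_{\delta,H})+\partial_y(y\,\mathbf C_{\delta,H})$.

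The crux of the whole argument sits in this integrability check, and is delightfully short. Substituting $\widetilde\alpha=-\beta/\omega$ and $\widetilde\beta=\alpha/\omega$, the left-hand side equals $\partial_x(\alpha/\omega)+\partial_y(\beta/\omega)$, which by Lemma~\ref{lemma:H} applied to $f$ equals $2H/\delta^2$; the right-hand side is also $2H/\delta^2$, by the divergence-form identity \eqref{eqn:calabi-divergence} that defines $\mathbf{C}_{\delta,H}$. This matching is precisely why the two Calabi potentials $\mathbf C_{\delta,\tau}$ and $\mathbf C_{\delta,H}$ enter with swapped bundle curvatures in the correspondence. Hence $g$ exists and is unique up to an additive constant (i.e.\ a vertical translation of the graph).

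With $g$ in hand, all remaining claims are algebraic. A direct computation gives $\widetilde\alpha^2+\widetilde\beta^2=(\alpha^2+\beta^2)/\omega^2$, and hence $1-\delta^2(\widetilde\alpha^2+\widetilde\beta^2)=1/\omega^2>0$; this at once yields the spacelike condition and the clean identity $\widetilde\omega=1/\omega$. In particular $\widetilde\alpha/\widetilde\omega=-\beta$ and $\widetilde\beta/\widetilde\omega=\alpha$, so by Lemma~\ref{lemma:H} the mean curvature of $g$ in $\mathbb{L}^3(M,H)$ is
$$\widetilde H=\tfrac{\delta^2}{2}\bigl(\alpha_y-\beta_x\bigr)=\tfrac{\delta^2}{2}\bigl(\partial_x(x\,\mathbf C_{\delta,\tau})+\partial_y(y\,\mathbf C_{\delta,\tau})\bigr)=\tau,$$
where the second equality cancels the symmetric $f_{xy}$-term coming from $\alpha,\beta$ and the third uses \eqref{eqn:calabi-divergence} for $\mathbf C_{\delta,\tau}$. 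For the conformal statement, one writes the induced metrics on the two graphs as $\mathbf I=\delta^{-2}(dx^2+dy^2)+(\alpha\,dx+\beta\,dy)^2$ and $\mathbf I^*=\delta^{-2}(dx^2+dy^2)-(\widetilde\alpha\,dx+\widetilde\beta\,dy)^2$, and then a term-by-term comparison of coefficients, substituting $\omega^2=1+\delta^2(\alpha^2+\beta^2)$, gives $\mathbf I^*=\omega^{-2}\mathbf I$ on the nose.

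Part (b) runs symmetrically via the inverse twin relations $\alpha=\widetilde\beta/\widetilde\omega$, $\beta=-\widetilde\alpha/\widetilde\omega$, with the roles of $\tau$ and $H$ (and of the two Calabi potentials) interchanged throughout, so the same integrability argument produces $f$ from $g$. The only genuine obstacle is the closedness verification of Step~1; once one recognizes that the divergence identity \eqref{eqn:calabi-divergence} for the Calabi potential is exactly the compatibility condition that turns the prescribed-mean-curvature PDE for $f$ into the exactness of the 1-form defining $g$, everything else reduces to bookkeeping.
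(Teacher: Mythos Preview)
Your proof is correct and, for the existence parts \textbf{(a)} and \textbf{(b)}, follows essentially the same route as the paper: both arguments rewrite the mean curvature equation for $f$ as a zero-divergence (equivalently, closedness) condition using the Calabi-potential identity~\eqref{eqn:calabi-divergence} for $\mathbf C_{\delta,H}$, invoke Poincar\'e's Lemma on the simply-connected $\Omega'$ to produce $g$, and then verify the spacelike condition $\widetilde\omega=1/\omega$ and the mean curvature $\widetilde H=\tau$ via the integrability $f_{xy}=f_{yx}$ together with~\eqref{eqn:calabi-divergence} for $\mathbf C_{\delta,\tau}$.

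Where you genuinely diverge from the paper is in the conformal statement. The paper introduces a local complex isothermal coordinate $\zeta$ on the graph of $f$, expands $\partial_\zeta\widehat F$ and $\partial_\zeta\widehat G$ in the orthonormal frames $\{E_i\}$ and $\{L_i\}$, and computes $\langle\partial_\zeta\widehat G,\partial_\zeta\widehat G\rangle$ and $\langle\partial_\zeta\widehat G,\overline{\partial_\zeta\widehat G}\rangle$ term by term to show they equal $\omega^{-2}$ times the corresponding quantities for $\widehat F$. You instead write the induced metrics directly in the $(x,y)$ coordinates as $\mathbf I=\delta^{-2}(dx^2+dy^2)+(\alpha\,dx+\beta\,dy)^2$ and $\mathbf I^*=\delta^{-2}(dx^2+dy^2)-(\widetilde\alpha\,dx+\widetilde\beta\,dy)^2$ and compare coefficients. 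Your route is shorter and more elementary, sidestepping both the auxiliary conformal chart and the frame computations; the paper's approach, on the other hand, makes the isothermal-coordinate statement explicit and ties the argument more visibly to the ambient geometry via the frames. Both are valid, and your version would serve as a cleaner replacement for that portion of the proof.
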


In the above conditions, the graph of $f$ in $\mathbb{E}^{3}(M,\tau)$ and the graph of $g$ in $\mathbb{L}^{3}(M, H)$ are called \textit{twin surfaces}.

\begin{proof} We will begin by proving \textbf{(a)}. The proof of \textbf{(b)} will be analogous.  Lemma \ref{lemma:H} shows that the mean curvature $H$ of the graph of $f$ over $\Omega'$ in $\mathbb{E}^{3}(M,\tau)$ satisfies the divergence-form equation
\begin{equation} \label{mean1}
\frac{2H}{{{\delta}}^{2}} = \frac{\partial}{\partial x} \left(
\frac{\alpha}{\omega} \right) + \frac{\partial}{\partial y} \left(
\frac{\beta}{\omega} \right),
\end{equation}
 where $( \alpha, \beta ) = ( f_{x} + y\, {\mathbf{C}}_{{\delta, \tau}}, f_{y} - x\,{\mathbf{C}}_{{\delta, \tau}} )$ and $\omega= \sqrt{ 1+ {{\delta}}^{2} \bigl( {\alpha}^2 + {\beta}^2\bigr)}$. Equation~\eqref{eqn:calabi-divergence} tells us that the Calabi potential ${\mathbf{C}}_{{\delta, \tau}}(x,y)$ satisfies the divergence-form equation
\begin{equation} \label{mean4}
  \frac{2\tau}{{\delta}^{2}} = \frac{\partial}{\partial x} \left(x\,{\mathbf{C}}_{{\delta, \tau}}   \right) + \frac{\partial}{\partial y}
  \left(  y\,{\mathbf{C}}_{{\delta, \tau}}   \right).
\end{equation}
  Likewise, the Calabi potential ${\mathbf{C}}_{{\delta, H}}(x,y)$ satisfies
\begin{equation} \label{mean2}
 \frac{2H}{{\delta}^{2}} =  \frac{\partial}{\partial x} \left(x\, {\mathbf{C}}_{{\delta, H}}  \right) +\frac{\partial}{\partial y} \left(  y\, {\mathbf{C}}_{{\delta, H}}   \right).
\end{equation}
 Using \eqref{mean2}, the equation \eqref{mean1} can be rewritten as the zero-divergence equation
\begin{equation*} \label{mean3}
 0 = \frac{\partial}{\partial x} \left( \frac{\alpha}{\omega} -  x\, {\mathbf{C}}_{{\delta, H}}   \right) + \frac{\partial}{\partial y}
 \left( \frac{\beta}{\omega} -  y\, {\mathbf{C}}_{{\delta, H}} \right).
\end{equation*}
 Since the domain $\Omega'$ is simply connected, Poincar\'{e} Lemma yields the existence of $g\in\mathcal C^2(\Omega')$ such that
\[
 \left( g_{x}, g_{y} \right) = \left( -\frac{\beta}{\omega} + y\,{\mathbf{C}}_{{\delta, H}},
 \frac{\alpha}{\omega} - x\,{\mathbf{C}}_{{\delta, H}}\right).
\]
 Setting $( \widetilde{\alpha}, \widetilde{\beta} ) = ( g_{x}- y\,{\mathbf{C}}_{{\delta, H}},
 g_{y} + x\, {\mathbf{C}}_{{\delta, H}} )$, we obtain the spacelike condition
\[
  1 - {{\delta}}^{2} \left( {\widetilde{\alpha}}^2 + { \widetilde{\beta}}^2 \right) =
  \frac{1}{{\omega}^{2}} > 0,
\]
 so it makes sense to introduce $\widetilde{\omega} = \sqrt{ 1 - {{\delta}}^{2} \bigl( {\widetilde{\alpha}}^2 + { \widetilde{\beta}}^2 \bigr)}$, from which the twin relations~\eqref{twin} easily follow. Finally, we employ the twin relations, the integrability condition $\frac{\partial}{\partial x}( f_{y})
  = \frac{\partial}{\partial y}( f_{x})$, and \eqref{mean4} to deduce 
\begin{align*}
 \frac{\partial}{\partial x} \left( \frac{ \widetilde{\alpha}}{ \widetilde{\omega}} \right) + \frac{\partial}{\partial y} \left( \frac{\widetilde{\beta}}{ \widetilde{\omega} } \right)
  &=   \frac{\partial}{\partial x} \left(  -\beta   \right) + \frac{\partial}{\partial y} \left( \alpha  \right) \\
  &=   \frac{\partial}{\partial x} \left( -  f_{y} + x\, {\mathbf{C}}_{{\delta, \tau}}  \right) + \frac{\partial}{\partial y} \left(
 f_{x} + y\, {\mathbf{C}}_{{\delta, \tau}} \right) \\
 &=   \frac{\partial}{\partial x} \left(  x\, {\mathbf{C}}_{{\delta, \tau}}   \right) + \frac{\partial}{\partial y}
  \left(  y\, {\mathbf{C}}_{{\delta, \tau}}   \right) =  \frac{2\tau}{{\delta}^{2}}.
\end{align*}
Hence the spacelike graph $z=g(x,y)$ over the same domain $\Omega'$ in $\mathbb{L}^{3}(M,\tau)$ has mean curvature $\tau$.

We will now deal with the last paragraph in the statement. Let $\zeta=\xi+i\eta$ be a local complex conformal coordinate on the graph $z=f(x,y)$ and consider the coordinate transformation $\phi:(x,y) \rightarrow  \left( \xi, \eta   \right) $  to obtain the  local isothermal parametrization  of the graph $z=f(x,y)$  in $\mathbb{E}^{3}(M,\tau)$ given by
\[
\widehat{F}=F \circ {\phi}^{-1} : \left(  \xi, \eta    \right) \mapsto (x,y) \mapsto (x,y, f(x,y)).
\]
Here, ${\phi}^{-1}$ denotes the  local inverse coordinate transformation
$\left( \xi, \eta   \right)  \mapsto (x,y)=\left(p \left( \xi, \eta   \right) , q \left( \xi, \eta   \right)  \right)$, so our goal is to prove that
 \[
  \widehat{G}=G \circ {\phi}^{-1} : \left( \xi, \eta   \right) \mapsto (x,y) \mapsto (x,y, g(x,y))
 \] 
is an isothermal parametrization of the twin graph $z=g(x,y)$ in $\mathbb{L}^{3}(M,H)$ and that the resulting conformal factor is $\frac{1}{\omega^2}$. Taking the complexified operator $\frac{\partial}{\partial \zeta}=  \frac{1}{2} \left( \frac{\partial}{\partial \xi} -i \frac{\partial}{\partial \eta} \right)$ and considering the $\mathbb C$-linear extensions of the metrics in $\mathbb E^3(M,\tau)$ and $\mathbb L^3(M,H)$, the proof will be finished once there have been proved
\[
 \left\langle\frac{\partial \widehat{G}}{\partial \zeta},  \frac{\partial \widehat{G}}{\partial \zeta} \right\rangle_{\mathbb L^3(M,H)}=0 \qquad \text{and} \qquad
{\left\langle \, \frac{\partial \widehat{G}}{\partial \zeta},  \, \overline{ \frac{\partial \widehat{G}}{\partial \zeta} } \, \right\rangle}_{\mathbb{L}^{3}(M,H)}
=\frac{1}{\omega^2} \left\langle\frac{\partial \widehat{F}}{\partial \zeta},\overline{\frac{\partial \widehat{F}}{\partial \zeta} } \, \right\rangle_{\mathbb{E}^{3}(M,\tau)}.\]
With respect to the global orthonormal frames $\{E_1,E_2,E_3\}$ in $\mathbb{E}^{3}(M,  \tau)$ and $\{L_1,L_2,L_3\}$ in $\mathbb{L}^3(M,H)$, given by~\eqref{eqn:frame} for $\epsilon=1$ and $\epsilon=-1$, respectively,
\begin{equation}\label{eqn:derivatives}
\begin{aligned}
       \frac{\partial \widehat{F}}{\partial \zeta}  
  &= \frac{\partial p}{\partial \zeta}   \frac{\partial  {F}}{\partial x}  
        +   \frac{\partial p}{\partial \zeta}   \frac{\partial  {F}}{\partial y} 
 = \frac{1}{  {\delta}  } \frac{\partial p}{\partial \zeta}  E_{1} +  \frac{1}{  {\delta}  }  \frac{\partial q}{\partial \zeta}    E_{2} +  \left( \alpha  \frac{\partial p}{\partial \zeta}  + \beta  \frac{\partial q}{\partial \zeta}   \right)   E_{3},\\
 \frac{\partial \widehat{G}}{\partial \zeta}  
 &=  \frac{\partial p}{\partial \zeta}   \frac{\partial G}{\partial x}  
        +   \frac{\partial p}{\partial \zeta}   \frac{\partial  G}{\partial y}
        =\frac{1}{  {\delta} }  \frac{\partial p}{\partial \zeta}   L_{1}  +   \frac{1}{  {\delta}  } \frac{\partial q}{\partial \zeta}   L_{2}  +  \left(\widetilde{\alpha}  \frac{\partial p}{\partial \zeta}  + \widetilde{\beta}  \frac{\partial q}{\partial \zeta}   \right)   L_{3} . 
\end{aligned}
\end{equation}
Using the twin relation $( \widetilde{\alpha} , \widetilde{\beta} ) = ( - \frac{ \beta  }{  \omega },  \frac{  \alpha }{ \omega} )$, it is easy to deduce following two equalities that will be useful in future computations:
\[
\frac{1 -   {\widetilde{\alpha}}^{2}  {\delta}^{2}  }{ {\delta}^{2}  } 
= \frac{1}{  {{\omega}}^{2}  } \left(   \frac{1}{  {\delta}^{2}  }  + {\alpha}^{2}  \right),
\qquad
\frac{1 - {  {\widetilde{\beta}}  }^{2}  {\delta}^{2}  }{ {\delta}^{2}  } 
= \frac{1}{  {{\omega}}^{2}  } \left(   \frac{1}{  {\delta}^{2}  }  + {\beta}^{2}  \right).
\]
Finally, by using~\eqref{eqn:derivatives}, we get
\begin{align*}
  {\left\langle \frac{\partial \widehat{G}}{\partial \zeta},  \frac{\partial \widehat{G}}{\partial \zeta} \right\rangle}_{\mathbb{L}^{3}(M, H) }    
   &=     \left( \frac{1}{{\delta} } \frac{\partial p}{\partial \zeta} \right)^{2} +  \left( \frac{1}{{\delta} }  \frac{\partial q}{\partial \zeta} \right)^{2} 
   -  \left(  \widetilde{\alpha}  \frac{\partial p}{\partial \zeta}  +\widetilde{\beta}  \frac{\partial q}{\partial \zeta}  \right)^{2}   \\ 
    &= \left(  \frac{1- {\widetilde{\alpha}  }^{2} {{\delta} }^{2} }{ {{\delta} }^{2}   }  \right) { \left( \frac{\partial p}{\partial \zeta} \right)}^{2} + 
     \left( \frac{1-{ \widetilde{\beta}  }^{2} {{\delta} }^{2}}{  {{\delta} }^{2}   } \right) {\left( \frac{\partial q}{\partial \zeta} \right) }^{2} - 2  \widetilde{\alpha}  \widetilde{\beta} \frac{\partial p}{\partial \zeta} \frac{\partial q}{\partial \zeta} \\ 
    &= \left( \frac{1 +  {\alpha}^2 {{\delta} }^{2} }{{\omega}^{2}  {{\delta} }^{2} }  \right) {\left( \frac{\partial p}{\partial \zeta} \right) }^{2} +  \left( \frac{1 +  {\beta}^2 {{\delta} }^{2} }{{\omega}^{2}  {{\delta} }^{2} } \right) {\left( \frac{\partial q}{\partial \zeta} \right) }^{2} - 2   \left( - \frac{ \alpha \beta}{ {\omega}^{2} } \right)  \frac{\partial p}{\partial \zeta} \frac{\partial q}{\partial \zeta} \\  
  &= \frac{1}{{\omega}^{2}}  \left[ \, \left( \frac{1}{{\delta} } \frac{\partial p}{\partial \zeta} \right)^{2} +  \left( \frac{1}{{\delta} } \frac{\partial q}{\partial \zeta} \right)^{2} 
  +  \left(   {\alpha}  \frac{\partial p}{\partial \zeta}  + {\beta}  \frac{\partial q}{\partial \zeta}  \right)^{2} \, \right]  \\ 
  &= \frac{1}{{\omega}^{2}}  {\left\langle \frac{\partial \widehat{F}}{\partial \zeta},  \frac{\partial \widehat{F}}{\partial \zeta} \right\rangle}_{\mathbb{E}^{3}(M,\tau) } = 0,
\end{align*} 
where, in the last line we used that $\zeta$ is conformal in $z=f(x,y)$, and
 \begin{align*} 
  {\left\langle \, \frac{\partial \widehat{G}}{\partial \zeta},  \, \overline{ \frac{\partial \widehat{G}}{\partial \zeta} } \, \right\rangle}_{\mathbb{L}^{3}(M,  H) } 
   &=     {\left\vert \frac{1}{{\delta} } \frac{\partial p}{\partial \zeta} \right\vert}^{2} +   {\left\vert \frac{1}{{\delta} }  \frac{\partial q}{\partial \zeta} \right\vert}^{2} 
   -  \left(  \widetilde{\alpha}  \frac{\partial p}{\partial \zeta}  +\widetilde{\beta}  \frac{\partial q}{\partial \zeta}  \right) 
   \left(  \widetilde{\alpha} \overline{  \frac{\partial p}{\partial \zeta} } + \widetilde{\beta} \overline{  \frac{\partial q}{\partial \zeta} } \right) \\ 
    &= \left(  \frac{1- {\widetilde{\alpha}  }^{2} {{\delta} }^{2} }{ {{\delta} }^{2}   }  \right) {  {\left\vert  \frac{\partial p}{\partial \zeta} \right\vert}}^{2} + 
     \left( \frac{1-{ \widetilde{\beta}  }^{2} {{\delta} }^{2}}{  {{\delta} }^{2}   } \right) {{\left\vert \frac{\partial q}{\partial \zeta} \right\vert} }^{2} -   \widetilde{\alpha}  \widetilde{\beta} \left(  \frac{\partial p}{\partial \zeta} \overline{ \frac{\partial q}{\partial \zeta} }
       +  \overline{   \frac{\partial p}{\partial \zeta}} \frac{\partial q}{\partial \zeta}  
     \right) \\ 
    &= \left( \frac{1 +  {\alpha}^2 {{\delta} }^{2} }{{\omega}^{2}  {{\delta} }^{2} }  \right) {\left\vert \frac{\partial p}{\partial \zeta} \right\vert }^{2} +  \left( \frac{1 +  {\beta}^2 {{\delta} }^{2} }{{\omega}^{2}  {{\delta} }^{2} } \right) {\left\vert \frac{\partial q}{\partial \zeta} \right\vert }^{2} - 2   \left( - \frac{ \alpha \beta}{ {\omega}^{2} } \right) \left(  \frac{\partial p}{\partial \zeta} \overline{ \frac{\partial q}{\partial \zeta} }
       +  \overline{   \frac{\partial p}{\partial \zeta}} \frac{\partial q}{\partial \zeta}  
     \right) \\  
  &= \frac{1}{{\omega}^{2}}  \left[ \,  {\left\vert \frac{1}{{\delta} } \frac{\partial p}{\partial \zeta} \right\vert}^{2} +  
  {\left\vert \frac{1}{{\delta} } \frac{\partial q}{\partial \zeta} \right\vert}^{2} 
  +  \left(   {\alpha}  \frac{\partial p}{\partial \zeta}  + {\beta}  \frac{\partial q}{\partial \zeta}  \right) 
   \left(   {\alpha} \overline{ \frac{\partial p}{\partial \zeta}}  + {\beta} \overline{ \frac{\partial q}{\partial \zeta} } \right) \, \right]  \\ 
  &= \frac{1}{{\omega}^{2}} {\left\langle \, \frac{\partial \widehat{F}}{\partial \zeta}, \, \overline{\frac{\partial \widehat{F}}{\partial \zeta} } \, \right\rangle}_{\mathbb{E}^{3}(M,  \tau) }.\qedhere
\end{align*}
\end{proof}

\begin{remark}[\textbf{Angle functions}]
The functions $\omega$ and $\widetilde\omega$ in the twin correspondence have an interesting geometric interpretation. Note that the upward-pointing unit normal vector fields ${\mathcal{N}}_{f}$ of the graph $z=f(x,y)$ in $\mathbb{E}^{3}(M,\tau)$, and $\mathcal N_g$ of the graph $z=g(x,y)$ in $\mathbb{L}^{3}(M,H)$ have the forms
\begin{align*}
{\mathcal{N}}_{f} &= - \frac{ \alpha  {\delta} }{\omega}   E_{1}  - \frac{ \beta  {\delta} }{\omega}   E_{2}   + \frac{1}{{\omega}}   E_{3},&
{\mathcal{N}}_{g} &= - \frac{ \widetilde\alpha  {\delta} }{\widetilde\omega}   L_{1}  - \frac{ \widetilde\beta  {\delta} }{\widetilde\omega}   L_{2}   + \frac{1}{{\widetilde\omega}}   L_{3}.
\end{align*}
The so-called \textit{angle functions} $u=\langle\mathcal N_f,E_3\rangle_{\mathbb E^3(M,\tau)}$ and $\widetilde u=\langle\mathcal N_g,L_3\rangle_{\mathbb L^3(M,\tau)}$ are nothing but $u=\frac{1}{\omega}$ and $\widetilde u=\frac{1}{\widetilde\omega}$. In particular, twin surfaces have reciprocal angle functions.
\end{remark}

\begin{corollary}[\textbf{Prescribed mean curvature as zero mean curvature}] 
Let $H: \mathbb{R}^{2} \rightarrow \mathbb{R}$ be a smooth function, and let $\Omega \subset  \mathbb{R}^{2}$ be open and simply connected.
\begin{itemize}
\item[\textbf{(a)}]  
 There exists a twin correspondence between graphs over $\Omega$ with prescribed mean curvature $H$ in Euclidean space $\mathbb{R}^{3}$ and maximal graphs over $\Omega$ in the generalized Heisenberg spacetime ${\mathrm{Nil}}^{3}_{1}(H)=\mathbb{L}^{3}({\mathbb{E}}^{2},H)$.
\item[\textbf{(b)}]  There exists a twin correspondence between spacelike graphs over $\Omega$ with prescribed mean curvature $H$ in Lorentz--Minkowski space $\mathbb{L}^{3}$ and minimal graphs defined over
$\Omega$  in the generalized Heisenberg space ${\mathrm{Nil}}^{3} (H)=\mathbb{E}^{3}({\mathbb{E}}^{2},H)$.
\end{itemize}
\end{corollary}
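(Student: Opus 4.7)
The plan is to derive both parts of the corollary as immediate specializations of the twin correspondence (Theorem~\ref{TCb}). In both cases I would take the base surface to be the Euclidean plane $M=\mathbb E^2=(\mathbb R^2,dx^2+dy^2)$, so that $\delta\equiv 1$ on the star-shaped domain $\mathbb R^2$, and I would play the role of $\Omega'$ in Theorem~\ref{TCb} with the simply-connected domain $\Omega$ supplied by the statement of the corollary.

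For part \textbf{(a)}, I would apply Theorem~\ref{TCb} with $\tau=0$ and $H=\phi$. Remark~\ref{BCV-twisted} identifies $\mathbb E^3(\mathbb E^2,0)$ with Euclidean space $\mathbb R^3$; consistently, the Calabi potential $\mathbf C_{1,0}$ given by \eqref{Cpotential01} vanishes identically, so the metric in Definition~\ref{BCV}\textbf{(a)} reduces to $dx^2+dy^2+dz^2$. The twin correspondence then matches graphs of prescribed mean curvature $\phi$ in $\mathbb R^3$ with spacelike graphs of mean curvature $0$, that is, maximal graphs, in $\mathbb L^3(\mathbb E^2,\phi)=\mathrm{Nil}^3_1(\phi)$, which is the claimed correspondence.

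For part \textbf{(b)}, I would instead apply Theorem~\ref{TCb} with $\tau=\phi$ and $H=0$. Invoking Remark~\ref{BCV-twisted} once more, the vanishing of the bundle curvature $H$ identifies the Lorentzian ambient $\mathbb L^3(\mathbb E^2,0)$ with the Lorentz--Minkowski spacetime $\mathbb L^3$, while $\mathbb E^3(\mathbb E^2,\phi)=\mathrm{Nil}^3(\phi)$ by definition. The twin correspondence then sends minimal graphs in $\mathrm{Nil}^3(\phi)$ to spacelike graphs of prescribed mean curvature $\phi$ in $\mathbb L^3$, as required.

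There is essentially no obstacle: the entire content of the corollary is absorbed in the hypotheses and conclusion of Theorem~\ref{TCb} once one recognizes, via Remark~\ref{BCV-twisted}, that a null bundle curvature turns a GBCV space into the corresponding Riemannian or Lorentzian product. The only bookkeeping is to match the roles of the letters $\tau$ and $H$ in the theorem with the two different specializations needed for \textbf{(a)} and \textbf{(b)}.
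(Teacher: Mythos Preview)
Your proposal is correct and matches the paper's approach: the corollary is stated without proof in the paper, being an immediate specialization of Theorem~\ref{TCb} exactly as you describe, with Remark~\ref{BCV-twisted} identifying the zero-bundle-curvature GBCV spaces with the flat products.
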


\begin{corollary}[\textbf{Twin correspondence in the BCV spaces, \cite[Theorem 2]{Lee11a}}]\label{coro:bcv}
Given constants $\kappa, \tau, H \in {\mathbb{R}}$, there exists a twin correspondence between 
graphs with mean curvature $H$ in 
the Riemannian BCV space ${\mathbb {E}}^{3}(\kappa,\tau)$ and 
spacelike graphs with mean curvature $\tau$ in the BCV spacetime ${\mathbb{L}}^{3}(\kappa, H)$.
\end{corollary}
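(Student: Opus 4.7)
The plan is to obtain this corollary as a direct specialization of Theorem \ref{TCb} to the case where the base surface and the bundle curvatures are as in the classical BCV construction. Since there are no additional hypotheses beyond constancy, almost all the work has already been done; what remains is just bookkeeping to match the notation.

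First, I would fix the standard conformal model of $\mathbb{M}^2(\kappa)$: take $\delta(x,y)=1+\tfrac{\kappa}{4}(x^2+y^2)$ on the domain
\[\Omega=\begin{cases}\mathbb R^2 & \text{if }\kappa\geq 0,\\ \{(x,y)\in\mathbb R^2: x^2+y^2<-4/\kappa\} & \text{if }\kappa<0,\end{cases}\]
so that $M=(\Omega,\delta^{-2}(dx^2+dy^2))$ is (a star-shaped piece isometric to) $\mathbb{M}^2(\kappa)$. As recalled in the remark on BCV spaces in Section \ref{GBCVs}, for the constant bundle curvatures $\tau$ and $H$ the Calabi potentials reduce to $\mathbf{C}_{\delta,\tau}=\tau/\delta$ and $\mathbf{C}_{\delta,H}=H/\delta$, and with these potentials Definition \ref{BCV} recovers exactly the classical BCV spaces $\mathbb{E}^3(\kappa,\tau)=\mathbb{E}^3(M,\tau)$ and $\mathbb{L}^3(\kappa,H)=\mathbb{L}^3(M,H)$.

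Second, I would invoke Theorem \ref{TCb} with these data. Given any graph $z=f(x,y)$ of mean curvature $H$ in $\mathbb{E}^3(\kappa,\tau)$ defined over a simply-connected open subset $\Omega'\subseteq\Omega$, part \textbf{(a)} of the theorem produces a function $g\in\mathcal C^2(\Omega')$ whose graph is spacelike in $\mathbb{L}^3(\kappa,H)$ and has mean curvature $\tau$, and the twin relations \eqref{twin} hold. Symmetrically, part \textbf{(b)} provides the reverse map. The simultaneous conformal coordinates and the relation between induced metrics also descend to this setting without any modification.

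There is essentially no obstacle. The only minor caveat worth mentioning is that any graph in the BCV spaces must be considered over a simply-connected coordinate domain, which is automatic when $\kappa\leq 0$ (since $\Omega$ is a disk or $\mathbb R^2$); for $\kappa>0$ one is working over the punctured sphere $\mathbb S^2(\kappa)\setminus\{\mathrm{pt}\}$ in its conformal stereographic chart, and the correspondence applies on every simply-connected subdomain, matching the generality already built into Theorem \ref{TCb}. No further computation is required.
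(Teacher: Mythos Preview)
Your proposal is correct and follows exactly the same approach as the paper: the corollary is obtained as the specialization of Theorem~\ref{TCb} to the case where the base curvature, the bundle curvature, and the mean curvature are all constant. The paper's own proof is a single sentence to this effect, whereas you have spelled out the bookkeeping (the conformal model of $\mathbb{M}^2(\kappa)$ and the resulting Calabi potentials) in more detail than strictly necessary.
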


\begin{example}[\textbf{Twin surfaces of helicoidal surfaces with constant mean curvature $\tau$ in ${\mathbb{L}}^{3}$ are catenoids in ${\mathrm{Nil}}^{3} (\tau)$}]
 Let $\lambda \geq 0$ and consider the spacelike helicoidal surface with constant mean curvature $\tau$ in ${\mathbb{L}}^{3}=\mathbb{L}^{3}(0,0)$,
 \[
     z=g(x,y)= \lambda \arctan{\left( \frac{y}{x} \right)} + \tau \,  h \left( \sqrt{x^2 +y^2} \right),
 \]
where the function $h:  (\lambda, \infty) \rightarrow \mathbb{R}$ satisfies the ODE
 \[h'(t) =  \sqrt{ \frac{ t^{2} - {\lambda}^{2} }{ {\tau}^{2} t^2 + 1 } }.\]  
Its twin surface in ${\mathrm{Nil}}^{3} \left(  \tau\right)=\mathbb{E}^{3}(0, \tau)$
is the half catenoid $z=f(x,y)$ defined over the domain $\sqrt{x^2 +y^2} >\lambda$. It is a rotationally invariant  minimal surface of the form
 \[z=f(x,y)=\lambda  \rho\left( \sqrt{x^2 +y^2} \right),\] 
where the one-variable function $\rho:  (\lambda, \infty) \rightarrow \mathbb{R}$ satisfies the ODE
 \[\rho'(t) =\sqrt{ \frac{ {\tau}^{2} t^2 + 1 }{ t^{2} - {\lambda}^{2} }}.\]
\end{example} 

\section{Complete spacelike surfaces}  \label{MTapp}

Entire graphs in $\mathbb E(M,\tau)$ are complete when the base surface $M$ is complete. This assertion fails to be true in the Lorentzian case in general, as shown by the examples constructed by Albujer~\cite{Alb08} in the \textit{Robertson-Walker} spacetime $\mathbb H^2(-1)\times \mathbb{R}$. We will begin by proving that complete spacelike surfaces in $\mathbb{L}^{3}(M,\tau)$ are entire graphs.

Throughout this section, we will deal with a non-compact simply connected surface $M$, conformally parametrized as $M=(\Omega,g=\delta^{-2}(\df x^2+\df y^2))$, where $\delta\in\mathcal C^\infty(\Omega)$ is positive, and $\Omega\subseteq\mathbb R^2$ is open and star-shaped with respect to the origin.
 
\begin{lemma} [{\textbf{Covering map lemma}, \cite[Ch.~VIII, Lemma 8.1]{KN}}] \label{coverMAP}
Let $\phi$ be a map from a connected complete Riemmanian manifold ${\mathcal{R}}_{1}$ onto another connected Riemmanian manifold ${\mathcal{R}}_{2}$ of the same dimension. If the map $\phi$ is distance non-decreasing,  then $\phi$ is a covering map, and ${\mathcal{R}}_{2}$ is also complete.
\end{lemma}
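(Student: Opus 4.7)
The plan is to establish the lemma in three stages: first, show that $\phi$ is a local diffeomorphism; second, prove path-lifting using the completeness of $\mathcal{R}_1$; and third, conclude both the covering property and the completeness of $\mathcal{R}_2$. I assume throughout that $\phi$ is smooth, which is the standard setting of Kobayashi-Nomizu. The distance non-decreasing hypothesis $d_{\mathcal{R}_2}(\phi(x),\phi(y))\geq d_{\mathcal{R}_1}(x,y)$ infinitesimally translates, via constant-speed radial curves at $p$, to $|d\phi_p(v)|\geq |v|$ for every tangent vector $v$, so $d\phi_p$ is injective at every $p$. Since $\dim\mathcal{R}_1=\dim\mathcal{R}_2$, the inverse function theorem yields that $\phi$ is a local diffeomorphism.

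To prove path-lifting, I fix a piecewise smooth curve $\gamma\colon [0,1]\to\mathcal{R}_2$ and a preimage $p_0\in\phi^{-1}(\gamma(0))$. The local diffeomorphism property provides a maximal interval $[0,a)\subseteq [0,1]$ on which a lift $\widetilde\gamma$ of $\gamma$ starting at $p_0$ exists. If $a\leq 1$, then for $s<t<a$ the distance non-decreasing hypothesis gives
\[d_{\mathcal{R}_1}(\widetilde\gamma(s),\widetilde\gamma(t))\leq d_{\mathcal{R}_2}(\gamma(s),\gamma(t))\leq L\bigl(\gamma|_{[s,t]}\bigr),\]
so $\widetilde\gamma$ is Cauchy as $t\to a^-$, and completeness of $\mathcal{R}_1$ yields a limit $\widetilde\gamma(a)\in\mathcal{R}_1$ with $\phi(\widetilde\gamma(a))=\gamma(a)$. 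A fresh application of the local diffeomorphism at $\widetilde\gamma(a)$ extends the lift past $a$, contradicting maximality; hence the lift is defined on the whole of $[0,1]$.

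With path-lifting in hand, the covering property follows by a standard argument: for each $q\in\mathcal{R}_2$, pick a normal geodesic ball $B=B_\rho(q)$ inside a totally normal neighborhood. For every $p\in\phi^{-1}(q)$, lift all radial geodesics of $B$ emanating from $q$ with initial point $p$; this defines a map $\psi_p\colon B\to\mathcal{R}_1$ that is a continuous right inverse of $\phi$, hence a homeomorphism onto an open neighborhood $U_p$ of $p$. Uniqueness of path lifts guarantees that distinct $p$'s yield disjoint $U_p$'s, and path-lifting run in reverse (starting at any $p'\in\phi^{-1}(B)$ and tracing the radial geodesic from $\phi(p')$ back to $q$) shows that $\phi^{-1}(B)=\bigsqcup_{p\in\phi^{-1}(q)}U_p$, so $B$ is evenly covered. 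For completeness of $\mathcal{R}_2$ I apply Hopf-Rinow: any geodesic $\gamma\colon[0,a)\to\mathcal{R}_2$ on a maximal interval with $a<\infty$ lifts to a curve $\widetilde\gamma$ of length at most $a$ in $\mathcal{R}_1$, which therefore extends continuously to $[0,a]$ by completeness; projecting back and invoking local existence of geodesics extends $\gamma$ past $a$, a contradiction.

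The main obstacle I anticipate is verifying the decomposition $\phi^{-1}(B)=\bigsqcup U_p$. The inclusion $\supseteq$ is immediate from the construction of the $\psi_p$, but the reverse inclusion requires picking an arbitrary preimage $p'\in\phi^{-1}(B)$, lifting the radial geodesic from $\phi(p')$ to $q$ starting at $p'$, and then using uniqueness of path lifts to identify the endpoint with some $p\in\phi^{-1}(q)$ satisfying $p'\in U_p$. The preliminary step of transferring the global metric condition to the pointwise bound $|d\phi_p(v)|\geq|v|$ is also delicate without an a priori smoothness hypothesis, but this is built into Kobayashi-Nomizu's framework.
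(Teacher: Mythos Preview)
The paper does not supply its own proof of this lemma: it merely states it and cites \cite[Ch.~VIII, Lemma 8.1]{KN}, so there is no in-paper argument to compare against. Your proposal is essentially the standard proof one finds in Kobayashi--Nomizu: pass from the metric hypothesis to the pointwise bound $|d\phi_p(v)|\geq|v|$ to get a local diffeomorphism, use completeness of $\mathcal{R}_1$ to lift paths, and then build evenly covered normal balls from lifted radial geodesics. The argument is correct as written, and your caveat about needing smoothness of $\phi$ to make the first step work is exactly right and matches the implicit assumptions in the cited reference.
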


\begin{lemma}[\textbf{Complete spacelike surfaces in $\mathbb{L}^{3}(M,\tau)$ are entire graphs}]\label{comentire}
If there exists a complete spacelike surface $\Sigma$ in $\mathbb{L}^{3}(M,\tau)$, then $\Sigma$ is an entire graph and $M$ is also complete.
\end{lemma}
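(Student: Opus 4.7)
The plan is to apply the covering map lemma (Lemma~\ref{coverMAP}) to the restriction $\pi|_\Sigma \colon \Sigma\to M$ of the Killing submersion. Concretely, I will verify that $\pi|_\Sigma$ is a local diffeomorphism which is distance non-decreasing in the infinitesimal sense $\|d\pi(v)\|_M \ge \|v\|_\Sigma$ for every $v\in T\Sigma$; completeness of $\Sigma$ will then force $\pi|_\Sigma$ to be a Riemannian covering of $M$ and $M$ to be complete. Since $\Omega$ is star-shaped (hence $M$ is simply-connected), this covering is a global diffeomorphism, and writing each $q\in\Sigma$ as $(\pi(q),z(q))$ exhibits $\Sigma$ as the graph of the single-valued function $u=z\circ(\pi|_\Sigma)^{-1}$ defined on the whole base $M$.

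The local-diffeomorphism assertion is immediate: $\ker d\pi$ is spanned by the unit timelike Killing vector $\partial_z=L_3$, which cannot be tangent to the spacelike $\Sigma$ (otherwise the induced metric would fail to be positive definite at that point). Thus $d\pi|_{T_p\Sigma}$ is injective between two-dimensional tangent spaces, and so it is an isomorphism. For the metric inequality I would decompose any $v\in T_p\Sigma$ uniquely as $v=v_H+c\,\partial_z$ with $v_H$ orthogonal to $\partial_z$ in the ambient Lorentzian metric; using $\langle\partial_z,\partial_z\rangle=-1$ and the fact that $\pi$ restricts to a Riemannian isometry on horizontal vectors, one obtains
\[
\|v\|_\Sigma^2 = \|v_H\|_{\mathrm{hor}}^2 - c^2, \qquad \|d\pi(v)\|_M^2 = \|v_H\|_{\mathrm{hor}}^2,
\]
so $\|d\pi(v)\|_M^2 = \|v\|_\Sigma^2+c^2 \ge \|v\|_\Sigma^2$, as required.

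The main obstacle is verifying the surjectivity hypothesis in Lemma~\ref{coverMAP}. The image $\pi(\Sigma)$ is open in $M$ by the local-diffeomorphism property, so it suffices to show that every $p\in M$ lies in this image. Fix $q_0\in\Sigma$, join $\pi(q_0)$ to $p$ by a curve $\gamma$ of finite length in $M$, and let $\tilde\gamma$ be the maximal lift of $\gamma$ to $\Sigma$ starting at $q_0$, defined on some interval $[0,T')$. The infinitesimal inequality yields $\mathrm{Length}_\Sigma(\tilde\gamma|_{[0,T')}) \le \mathrm{Length}_M(\gamma|_{[0,T')}) < \infty$, so $\tilde\gamma(t)$ is a Cauchy curve in the complete manifold $\Sigma$; its limit point exists and, thanks to the local-diffeomorphism property, permits extending the lift past $T'$. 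This forces $T'$ to reach the endpoint of $\gamma$, whose lift projects to $p$, and surjectivity follows. Lemma~\ref{coverMAP} then delivers the covering property together with the completeness of $M$, and simple-connectedness promotes the covering to a diffeomorphism, completing the proof of the entire-graph statement.
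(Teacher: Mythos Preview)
Your proof is correct and follows essentially the same route as the paper: both arguments show that the projection $\pi|_\Sigma$ is distance non-decreasing (the paper via the one-line metric inequality $\delta^{-2}(dx^2+dy^2)-(\cdots)^2\le\delta^{-2}(dx^2+dy^2)$, you via the equivalent horizontal/vertical decomposition), then invoke Lemma~\ref{coverMAP} and the simple-connectedness of $\Omega$. Your explicit treatment of surjectivity via path-lifting is a point the paper glosses over when citing Lemma~\ref{coverMAP}, so your write-up is in fact slightly more complete.
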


\begin{proof}  The same ideas as in \cite[Lemma 3.1]{AA09} and \cite[Proposition 3.3]{ARS95} work here. We begin with a complete spacelike surface $\Sigma \subset \mathbb L^3(M,\tau)$. From~\eqref{Rmetric}, we get the following upper bound for the metric in $\mathbb L^3(M,\tau)$:
 \begin{equation*} 
\frac{1}{{\delta}^{2}} \left(\df x^2 + \df y^2\right)
  - {\left(\df z - \mathbf{C}_{\delta,\tau}(x,y) \left( y\,\df x - x\,\df y  \right)
   \right)}^{2}  \leq \frac{1}{{\delta}^{2}} \left(\df x^2 + \df y^2\right).
 \end{equation*}
Thus the projection $\pi:(x,y,z) \in \Omega\times\mathbb R \to (x,y) \in \Omega$ from the  ambient space $\mathbb L^3(M,\tau)$ to the base  $M=(\Omega, \delta^{-2}(\df x^2 + \df y^2))$ is a distance non-decreasing map. 

Since the induced metric in the spacelike surface $\Sigma$ is complete, Lemma \ref{coverMAP} applied to $\pi_{|\Sigma}: \Sigma \to \Omega$ ensures that $\pi_{|\Sigma}$ is a covering map and $\Sigma$ is complete. As the domain $\Omega$ is simply connected, the covering map  $\pi_{|\Sigma}: \Sigma \rightarrow \Omega$ must be a global diffeomorphism, so $\Sigma$ is an entire graph.
\end{proof}
 
In 1976, Cheng and Yau \cite{CY76} proved the remarkable result that any entire spacelike graph with constant mean curvature in flat Lorentz--Minkowski space (in particular, in $\mathbb{L}^{3}=\mathbb{L}^{3}(0, 0)$) is complete. We will now show that a similar conclusion is true for maximal surfaces in the 3-dimensional Lorentzian hyperbolic space. The proof relies on combining the twin correspondence with the Daniel sister correspondence~\cite{Dan07}, which gives an isometric correspondence between entire minimal graphs in Heisenberg space and entire graphs with mean curvature $\frac{1}{2}$ in $\mathbb H^2\times\mathbb R$.
 
\begin{theorem}[\textbf{Complete maximal surfaces in anti--de Sitter spacetime}] \label{CY} 
Given a constant $\kappa<0$, let $\Sigma$ be a maximal surface in the anti--de Sitter spacetime $\mathbb{L}^{3}(\kappa, \frac{1}{2}\sqrt{-\kappa})$.  The following two statements are equivalent:
\begin{itemize}
\item[\textbf{(a)}] The surface $\Sigma$ is complete.
\item[\textbf{(b)}] The surface  $\Sigma$ is an entire graph over the whole hyperbolic plane ${\mathbb{H}}^{2}(\kappa)$.
\end{itemize}
\end{theorem}

\begin{proof}
From Lemma \ref{comentire}, it is clear that \textbf{(a)}  $\Rightarrow$ \textbf{(b)}, so we will focus on \textbf{(b)}  $\Rightarrow$ \textbf{(a)}. By homothetically rescaling the metric of $\mathbb L^3(\kappa,\frac{1}{2}\sqrt{-\kappa})$, without loss of generality, we assume that $\kappa=-1$.

Let $\Sigma$ be an entire maximal graph in $\mathbb{L}^{3}(-1,\frac{1}{2})$, and consider its twin entire graph $\Sigma^*$  with constant mean curvature $\frac{1}{2}$ in ${\mathbb{H}}^{2} \times {\mathbb{R}}=\mathbb{E}^{3}(-1, 0)$. Since $\Sigma^*$ is simply connected, we can take its sister minimal surface $\hat\Sigma^*$ in the Heisenberg group $\mathrm{Nil}^3=\mathbb{E}^3(0,\frac{1}{2})$ via the Daniel correspondence~\cite{Dan07}. From~\cite[Corollary 3.3]{DH09} we deduce that $\hat\Sigma^*$ is also an entire graph over $\mathbb E^2$, so we can employ the twin correspondence again to associate an entire spacelike graph $\hat\Sigma$ in $\mathbb L^3(0,0)$ with constant mean curvature $\frac{1}{2}$:
\[\xymatrix{\Sigma\subset\mathbb L^3(-1,\tfrac{1}{2})\ar@{<->}[d]\ar@{<-->}[rr]&&\hat\Sigma\subset\mathbb{L}^3(0,0)\ar@{<->}[d]\\\Sigma^*\subset\mathbb{E}^3(-1,0)\ar@{<->}[rr]&&{\hat\Sigma}^*\subset\mathbb{E}^3(0,\frac{1}{2})}\]
Since Daniel's isometric correspondence between $\Sigma^*$ and $\hat\Sigma^*$ preserves the angle function, and Theorem~\ref{TCb} implies that the metrics of the Lorentzian graphs are conformal to the those of the corresponding Riemannian space with conformal factor the square of the angle function, we deduce that $\Sigma$ and $\hat\Sigma$ are isometric surfaces. Since $\hat\Sigma$ is complete by Cheng and Yau's result \cite{CY76}, so is $\Sigma$.
\end{proof}

\begin{remark}
The proof of Theorem~\ref{CY} shows that the moduli space of the complete maximal surfaces in the anti--de Sitter spacetime is large. 
According to Fern\'{a}ndez--Mira \cite[Theorem 1 and Proposition 14]{FM09} or Cartier--Hauswirth \cite[Theorem 3.9]{CH12}, there exist many entire graphs with constant mean curvature $\frac{1}{2}$ in the product space ${\mathbb{H}}^{2} \times \mathbb{R}=\mathbb{E}^{3}(-1, 0)$, which give rise to entire graphs with constant mean curvature $\frac{1}{2}\sqrt{-\kappa}$ in ${\mathbb{H}}^{2}(\kappa) \times \mathbb{R}=\mathbb{E}^{3}(\kappa,0)$ by rescaling the metric. By the twin correspondence, these surfaces correspond to entire maximal graphs in $\mathbb L^3(\kappa,\frac{1}{2}\sqrt{-\kappa})$, which are complete by the equivalence between \textbf{(a)}  and \textbf{(b)} in Theorem~\ref{CY}.
\end{remark}

\begin{remark} 
Bonsante and Schlenker \cite{BS10} used the geometry of maximal surfaces in the anti--de Sitter spacetime to give a variant of Schoen's conjecture on the universal Teichm\"{u}ller space. The proof of Theorem~\ref{CY} gives geometrical equivalences between the following entire graphs in different spaces:
\begin{itemize}
\item[\textbf{(a)}] entire maximal spacelike graphs (defined over the hyperbolic plane ${\mathbb{H}}^{2}$) in the anti--de Sitter spacetime $\mathbb{L}^{3}(-1, \frac{1}{2})$. 
\item[\textbf{(b)}] entire mean curvature $\frac{1}{2}$ graphs (defined over the hyperbolic plane ${\mathbb{H}}^{2}$) in the Riemannian product space ${\mathbb{H}}^{2}\times {\mathbb{R}}$.
\item[\textbf{(c)}] entire mean curvature $\frac{1}{2}$ spacelike graphs (defined over the Euclidean plane ${\mathbb{E}}^{2}$) in the Lorentz--Minkowski space ${\mathbb{L}}^{3}$.
\item[\textbf{(d)}] entire minimal graphs (defined over the Euclidean plane ${\mathbb{E}}^{2}$) in the Heisenberg space  $\mathrm{Nil}^3(\frac{1}{2})=\mathbb{E}^{3}(0, \frac{1}{2})$. 
\end{itemize}
\end{remark}

In order to give a sharp non-existence result for complete spacelike surfaces in Lorentzian Killing submersions, we introduce the Cheeger isoperimetric constant.

 \begin{definition}\label{defi:cheeger}
 The Cheeger constant of a non-compact Riemannian surface $M$ without boundary is defined as 
\begin{equation}\label{cheeger}
\mathrm{Ch}(M)=\inf\left\{\frac{\Long(\partial D)}{\Area(D)}:D\subset M\text{ open and regular}\right\} 
\geq 0.
\end{equation}
Here, an open subset $D\subset M$ is \textit{regular} if it is relatively compact and its boundary is a smooth curve so the quotient in \eqref{cheeger} makes sense. 
\end{definition}

\begin{theorem} \label{NON}
Let $M$ be a non-compact simply connected surface. 
\begin{itemize}
 \item[\textbf{(a)}] Given $H\in \mathcal C^\infty(M)$ such that $\inf_M|H|>\frac{1}{2}\mathrm{Ch}(M)$, the space $\mathbb E^3(M,\tau)$ admits no entire graphs with mean curvature $H$ for any $\tau\in\mathcal C^\infty(M)$.
 \item[\textbf{(b)}] Given $\tau\in\mathcal C^\infty(M)$ such that $\inf_M|\tau|>\frac{1}{2}\mathrm{Ch}(M)$, the spacetime $\mathbb L^3(M,\tau)$ admits neither complete spacelike surfaces nor entire spacelike graphs.
\end{itemize}
\end{theorem}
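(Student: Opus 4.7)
My plan is to prove part (a) by a direct isoperimetric argument on the base and then deduce part (b) from part (a) via the twin correspondence of Theorem~\ref{TCb} combined with Lemma~\ref{comentire}. For part (a), arguing by contradiction, suppose the graph $z=f(x,y)$ of some $f\in\mathcal{C}^{2}(\Omega)$ in $\mathbb{E}^{3}(M,\tau)$ has mean curvature function $H$. Lemma~\ref{lemma:H} supplies the divergence identity
\[2H=\mathrm{div}_{M}\!\left(\frac{G}{\sqrt{1+\|G\|_{M}^{2}}}\right),\]
and the essential observation is that the vector field $W:=G/\sqrt{1+\|G\|_{M}^{2}}$ is globally defined on $M=\Omega$ (the graph being entire) and satisfies the pointwise bound $\|W\|_{M}<1$. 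For an arbitrary regular domain $D\subset M$, the divergence theorem yields
\[\left|\int_{D}2H\,dA_{M}\right|=\left|\int_{\partial D}\langle W,\nu\rangle_{M}\,d\ell\right|\leq \Long(\partial D).\]
Since $\inf_{M}|H|>0$ and $M$ is connected, $H$ has constant sign, so the left-hand side is at least $2(\inf_{M}|H|)\,\Area(D)$; dividing by $\Area(D)$ and taking the infimum over all regular $D$ then produces $\mathrm{Ch}(M)\geq 2\inf_{M}|H|$, contradicting the hypothesis.

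For part (b), Lemma~\ref{comentire} reduces the non-existence of complete spacelike surfaces to that of entire spacelike graphs, since any complete spacelike surface is automatically an entire graph over $M$. Suppose then that an entire spacelike graph $z=g(x,y)$ exists in $\mathbb{L}^{3}(M,\tau)$ and denote its mean curvature function by $h\in\mathcal{C}^{2}(M)$. Since $M$ is simply-connected, Theorem~\ref{TCb}(b) applied with $\Omega'=\Omega=M$ produces an entire graph of some $f\in\mathcal{C}^{2}(M)$ in $\mathbb{E}^{3}(M,h)$ whose mean curvature function is precisely $\tau$. But part~(a), applied with $\tau$ now playing the role of the prescribed mean curvature and $h$ the role of the (irrelevant) bundle curvature, forbids the existence of such a graph because $\inf_{M}|\tau|>\tfrac{1}{2}\mathrm{Ch}(M)$ by hypothesis; this contradiction finishes part (b).

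The main obstacle I foresee is the sign issue in part (a): the divergence theorem alone provides only an absolute bound on $\int_{D}2H\,dA_{M}$, so one must invoke $\inf_{M}|H|>0$ together with the connectedness of $M$ to pin down the sign of $H$ before converting that absolute bound into the required lower bound on $\mathrm{Ch}(M)$. Once that step is cleared, part (a) is a one-line application of the divergence theorem, and part (b) is a purely formal reduction via the twin correspondence that never uses any regularity or sign property of the auxiliary mean curvature function $h$.
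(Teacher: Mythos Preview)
Your proposal is correct and follows essentially the same approach as the paper's own proof: the Heinz-type isoperimetric argument via the divergence form of the mean curvature equation for part~\textbf{(a)}, and the reduction through Lemma~\ref{comentire} plus the twin correspondence for part~\textbf{(b)}. Your treatment of the sign issue (using connectedness of $M$ and $\inf_M|H|>0$ to conclude $H$ has constant sign, then bounding $|\int_D 2H|$) is slightly cleaner than the paper's case split $H>0$ versus $H<0$, but the substance is identical.
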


\begin{proof}
We will use a classical argument \cite{ER,He55, Pen12, Sa89} originally due to Heinz to obtain item \textbf{(a)}. Aiming at a contradiction, suppose that such an entire graph exists, and assume first that $H>0$ (the case $H<0$ will be treated later). Its mean curvature $H$ admits the expression
\begin{equation}\label{divergence-omega}
2H=\mathrm{div}_M\left(\frac{G}{\sqrt{1+{\|G\|}^2_M}}\right),
\end{equation}
for some vector field $G$ on $M$ as in~\eqref{eq:H}. Letting $H_0=\inf_M(H)$ and integrating~\eqref{divergence-omega} over an open regular domain $D\subset M$, we get
\begin{align*}
             2H_0\,\Area(D) 
 \leq \int_{D}   \mathrm{div}_{M}\left(\frac{G}{\sqrt{1+{\|G\|}_{M}^{2}}}\right)   
 = \int_{\partial D}\frac{ \langle G,\eta\rangle }{\sqrt{1+{\|G\|}_{M}^{2}}}\leq \Long(\partial D),
\end{align*}
where $\eta$ denotes the outer unit conormal vector field to $D$ along its boundary and we have used the divergence formula and Cauchy-Schwarz inequality. As this is valid for all open regular domains, we deduce that 
\[\textstyle H_0=\inf_M(H)=\inf_M|H|<\tfrac{1}{2}\mathrm{Ch}(M),\] contradicting the hypothesis in the statement. If $H<0$, then the argument above can be adapted by replacing $G$ by $-G$, to get that $-2H_0\Area(D)\leq\Long(\partial D)$, so $-H_0=\inf_M|H|<\frac{1}{2}\mathrm{Ch}(M)$ and we also get a contradiction.

In order to prove item \textbf{(b)}, we will reason by contradiction again: if there existed such a complete spacelike surface $\Sigma$, then $\Sigma$ would be an entire graph by Lemma~\ref{comentire} so its twin surface $\widetilde\Sigma$ would be an entire graph in $\mathbb E^3(M,H)$, where $H$ denotes the mean curvature of $\Sigma$. The mean curvature of $\widetilde\Sigma$ would be $\tau$, satisfying $\inf_M|\tau|>\frac{1}{2}\mathrm{Ch}(M)$ and contradicting item \textbf{(a)}.
\end{proof}

\begin{corollary}\label{coro:infimum}
Let $M$ be a complete non-compact simply connected surface and let $c=\inf\{K(p):p\in M\}\leq 0$, where $K$ denotes the Gaussian curvature of $M$. 
\begin{itemize}
 \item[\textbf{(a)}] Given $H\in \mathcal C^\infty(M)$ such that $\inf_M|H|>\frac{1}{2}\sqrt{-c}$, the space $\mathbb E^3(M,\tau)$ admits no entire graphs with prescribed mean curvature $H$ for any $\tau\in\mathcal C^\infty(M)$.
 \item[\textbf{(b)}] Given $\tau\in\mathcal C^\infty(M)$ such that $\inf_M|\tau|>\frac{1}{2}\sqrt{-c}$, the spacetime $\mathbb L^3(M,\tau)$ admits neither complete spacelike surfaces nor entire spacelike graphs.
\end{itemize}
\end{corollary}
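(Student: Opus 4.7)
The plan is to reduce Corollary~\ref{coro:infimum} to Theorem~\ref{NON} via the curvature-to-Cheeger comparison
\begin{equation*}
\mathrm{Ch}(M)\leq\sqrt{-c}.
\end{equation*}
Once this is established, the hypothesis $\inf_M|H|>\tfrac12\sqrt{-c}$ (resp.\ $\inf_M|\tau|>\tfrac12\sqrt{-c}$) forces $\inf_M|H|>\tfrac12\mathrm{Ch}(M)$ (resp.\ the analogous inequality for $\tau$), and item \textbf{(a)} (resp.\ \textbf{(b)}) follows verbatim from the corresponding part of Theorem~\ref{NON}. This is precisely the ``consequence'' paragraph alluded to in the introduction.

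For the Cheeger comparison itself, I would invoke Bishop--Gromov volume comparison in dimension two. Since $M$ is a complete Riemannian surface with $K\geq c$, and in dimension two $\mathrm{Ric}=K\,g$, for any fixed $p\in M$ the ratio $R\mapsto\Area(B_R(p))/A_c(R)$ is non-increasing, where $A_c(R)$ denotes the area of a geodesic ball of radius $R$ in the simply-connected surface of constant Gaussian curvature $c$. Writing $V(R)=\Area(B_R(p))$ and $J(R)=\Long(\partial B_R(p))$, this monotonicity rearranges to
\begin{equation*}
\frac{J(R)}{V(R)}\leq\frac{A_c'(R)}{A_c(R)},
\end{equation*}
and the right-hand side tends to $\sqrt{-c}$ as $R\to\infty$: this is a direct computation in $\mathbb{H}^2(c)$ when $c<0$, while for $c=0$ it reduces to the elementary limit $2/R\to 0=\sqrt{-c}$.

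To convert this pointwise estimate into the desired bound on the Cheeger constant, it remains to test Definition~\ref{defi:cheeger} against geodesic balls. For almost every $R$ the ball $B_R(p)$ is, up to an arbitrarily small perturbation smoothing out possible singularities arising from the cut locus, an admissible regular test domain; passing to the limit through a sequence $R_n\to\infty$ of such radii then yields $\mathrm{Ch}(M)\leq\sqrt{-c}$. The only mildly delicate point is this last regularization step, since geodesic spheres need not be globally smooth when they hit the cut locus, but it is handled by a standard approximation and does not affect the resulting inequality. With the comparison secured, both items \textbf{(a)} and \textbf{(b)} follow at once from Theorem~\ref{NON}, completing the proof.
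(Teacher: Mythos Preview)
Your approach is correct and matches the paper's: both reduce immediately to Theorem~\ref{NON} via the comparison $\mathrm{Ch}(M)\leq\sqrt{-c}$. The only difference is that the paper invokes \cite[Lemma~4.1]{ER} for this inequality, whereas you supply your own argument via Bishop--Gromov volume comparison and testing the Cheeger constant on geodesic balls; once the comparison is in hand, items \textbf{(a)} and \textbf{(b)} follow verbatim from Theorem~\ref{NON} in both proofs.
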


\begin{proof}
 The estimate in \cite[Lemma 4.1]{ER} gives $\mathrm{Ch}(M)\leq\sqrt{-c}$. Then, the statements \textbf{(a)} and \textbf{(b)} immediately follow from Theorem~\ref{NON}.
\end{proof}

Observe that Theorem~\ref{CY} (for $c<0$) and the classical classification result by Calabi~\cite{Cal70} (for $c=0$) show that the lower bound $\frac{1}{2}\sqrt{-c}$ in Corollary~\ref{coro:infimum} is sharp. 

To conclude, observe that Theorem~\ref{NON} also gives information about causality in $\mathbb L^3(M,\tau)$ spaces when we look at them as spacetimes. A spacetime $\mathcal{L}$ is said to be \textit{distinguishing} when any two different points $p,q\in \mathcal{L}$ have different future or past cones. Equivalently, if for any $p\in \mathcal{L}$ and any neighborhood $U$ of $p$, there exists a neighborhood $V\subset U$ of $p$, such that causal (i.e., non-spacelike) curves starting at $p$ and leaving $V$ never enter $V$ again. 

\begin{corollary}
The spacetime $\mathbb L^3(M,\tau)$ is not distinguishing when the bundle curvature $\tau$ satisfies $\inf_M|\tau|>\frac{1}{2}\mathrm{Ch}(M)$.
\end{corollary}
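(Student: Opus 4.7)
The plan is to produce a closed future-directed timelike curve $\gamma$ in the spacetime $\mathbb{L}^{3}(M,\tau)$ under the hypothesis. Once such a $\gamma$ exists, any two distinct points $p\neq q$ on its image satisfy $p\in I^+(q)$ and $q\in I^+(p)$ by concatenating arcs of $\gamma$, so the transitivity of the chronological relation yields $I^+(p)=I^+(q)$ and $I^-(p)=I^-(q)$, which is incompatible with the distinguishing condition stated in the preceding paragraph.

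First I would reduce to the case $\tau>0$: the hypothesis forces $|\tau|$ to be bounded below by a positive constant, and since $M=(\Omega,g)$ is connected (being star-shaped in $\mathbb{R}^2$), $\tau$ has constant sign; if $\tau<0$ the argument below goes through after reversing the orientation of the loop $\partial D$ used in the next step. Using the definition of the Cheeger constant and the inequality $2\inf_M|\tau|>\mathrm{Ch}(M)$, I would then pick a regular domain $D\subset M$ with
$$\Long_M(\partial D)\;<\;2\inf_M|\tau|\cdot\Area_M(D)\;\leq\;2\int_D\tau\,dA_M,$$
and parametrize $\partial D$ counterclockwise by $M$-arclength as $s\mapsto(x(s),y(s))$ on $[0,L]$ with $L=\Long_M(\partial D)$, so that $\dot x^2+\dot y^2=\delta^2$.

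The key auxiliary identity, obtained by applying Stokes' theorem to the $1$-form $\mathbf{C}_{\delta,\tau}(y\,dx-x\,dy)$ and using the divergence-form equation~\eqref{eqn:calabi-divergence}, is
$$\oint_{\partial D}\mathbf{C}_{\delta,\tau}(y\,dx-x\,dy)\;=\;-2\int_D\tau\,dA_M.$$
I would then lift $\partial D$ to $\mathbb{L}^{3}(M,\tau)$ by $\gamma(s)=(x(s),y(s),z(s))$, declaring $\dot z(s)=\mathbf{C}_{\delta,\tau}(y\dot x-x\dot y)+\lambda$ for the constant $\lambda=\tfrac{2}{L}\int_D\tau\,dA_M$. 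The choice of $\lambda$ makes $z(L)-z(0)=0$, so $\gamma$ is a closed $\mathcal{C}^1$ loop; the domain inequality guarantees $\lambda>1$, and together with $\dot x^2+\dot y^2=\delta^2$ this forces the Lorentzian squared norm of $\dot\gamma$ to equal $1-\lambda^2<0$ while $\dot z-\mathbf{C}_{\delta,\tau}(y\dot x-x\dot y)=\lambda>0$, so $\gamma$ is future-directed timelike as required.

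The main point demanding care is the sign bookkeeping in the Stokes computation (the interaction between the orientation of $\partial D$, the sign chosen for $\tau$, and the time orientation induced by $\partial_z$), together with the verification $\lambda>1$, which is precisely where the Cheeger bound enters. Once these are checked, the existence of $\gamma$ and the corresponding failure of the distinguishing condition follow directly from the first paragraph.
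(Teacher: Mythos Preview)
Your argument is correct and in fact proves something stronger than the paper: you exhibit a closed timelike curve, so $\mathbb L^3(M,\tau)$ is not even chronological, hence a fortiori not distinguishing. The only point you leave implicit is that the regular domain $D$ can be taken with connected boundary; since $\Omega$ is star-shaped (hence simply connected), replacing $D$ by the topological disk bounded by its outermost boundary component only decreases the isoperimetric quotient, so this reduction is harmless. Your Stokes computation, the closing condition $z(L)=z(0)$, and the inequality $\lambda>1$ all check out exactly as you describe.

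The paper proceeds quite differently and much more indirectly: it invokes a structural result of Javaloyes--S\'anchez~\cite{JS08} to the effect that a distinguishing spacetime admitting a complete timelike Killing field must carry a complete spacelike surface, and then appeals to Theorem~\ref{NON}(b), which rules out such surfaces under the hypothesis $\inf_M|\tau|>\tfrac12\mathrm{Ch}(M)$. Thus the paper's proof relies on the twin correspondence (through Theorem~\ref{NON}) together with an external black-box, whereas your argument is self-contained and constructive, using only the Cheeger inequality and the divergence identity~\eqref{eqn:calabi-divergence} for the Calabi potential. The trade-off is that the paper's route explains \emph{why} the same Cheeger threshold governs both the non-existence of spacelike surfaces and the failure of causality, while your route makes the causal pathology completely explicit.
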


\begin{proof} 
Distinguishing spacetimes with a complete timelike Killing vector field admit a Riemannian submersion structure whose fibers are the integral curves of the Killing vector field, and they also admit complete spacelike surfaces~\cite{JS08}. Hence the corollary follows from Theorem~\ref{NON}. 
\end{proof}

\noindent\textbf{Acknowledgement.}  This work was initiated while the first author was visiting the University of Granada during the 2011-2012 academic year. He would like to thank the department of Geometry and Topology and his mentor Joaqu\'{i}n P\'{e}rez for their warm hospitality and for providing an excellent working environment. The authors would like to thank Miguel S\'{a}nchez and Miguel \'{A}ngel Javaloyes for their useful comments, as well as the anonymous referees for their extremely meticulous reports. The first author was supported by the National Research Foundation of Korea Grant funded by the Korean Government (Ministry of Education, Science and Technology) [NRF-2011-357-C00007].  The second author was supported by the Spanish MCyT-Feder Research Project MTM2014-52368-P, and by the EPSRC Grant No.~EP/M024512/1.


\begin{thebibliography}{99}

\bibitem{Alb08} A.~L.~Albujer, 
\newblock\textit{New examples of entire maximal graphs in $\mathbb{H}^{2} \times \mathbb{R}_{1}$}, Differential Geom. Appl. \textbf{26} (2008), no. 4, 456--462.

\bibitem{AA09} A.~L.~Albujer, L.~J.~Al\'{i}as,  
\newblock \textit{Calabi-Bernstein results for maximal surfaces in Lorentzian product spaces}, J. Geom. Phys. \textbf{59} (2009), no. 5, 620--631.

\bibitem{ARS95}  L.~J.~Al\'{i}as, A.~Romero, M.~S\'{a}nchez, 
\newblock\textit{Uniqueness of complete spacelike hypersurfaces of constant mean curvature in generalized Roberson-Waler spacetimes}, Gen. Relativity Gravit.  \textbf{27} (1995), no. 1, 71--84.

\bibitem{BS82} R.~Bartnik, L.~Simon, 
\newblock \textit{Spacelike hypersurfaces with prescribed boundary values and mean curvature}, Comm. Math. Phys. \textbf{87} (1982/83), no. 1, 131--152. 

\bibitem{BGG69} E.~Bombieri, E.~de Giorgi, E.~Giusti, 
\newblock \textit{Minimal cones and the Bernstein problem}, Invent. Math. \textbf{7} (1969) 243--268. 

\bibitem{BS10} F.~Bonsante, J.~-M.~Schlenker, 
\newblock \textit{Maximal surfaces and the universal Teichm\"{u}ller space}, Invent. Math. \textbf{182} (2010), 279--333.

\bibitem{Cal70} E.~Calabi,  \newblock \textit{Examples of Bernstein problems for some non-linear equations}, Proc. Sympos. Pure Math. \textbf{15} (1970), Amer. Math. Soc., Providence, RI, 223--230.

\bibitem{CH12} S.~Cartier, L.~Hauswirth, 
\newblock \textit{Deformations of constant mean curvature-${1}/{2}$ surfaces in ${\mathbb{H}}^{2} \times \mathbb{R}$ with vertical ends at infinity}, Comm. Anal. Geom. \textbf{22} (2014), no. 1, 109--148.

\bibitem{CY76} S.~Y.~Cheng, S.~T.~Yau, 
\newblock\textit{Maximal space-like hypersurfaces in the Lorentz-Minkowski spaces}, Ann. of Math. (2) \textbf{104} (1976), no. 3, 407--419. 

\bibitem{CB76} Y.~Choquet-Bruhat, 
\newblock \textit{Maximal submanifolds and submanifolds with constant mean  extrinsic curvature of a Lorentzian manifold}, Ann. Scuola Norm. Sup. Pisa Cl. Sci. (4) \textbf{3} (1976), no. 3, 361--376. 

\bibitem{Dan07}  B.~Daniel,
\newblock \textit{Isometric immersions into $3$-dimensional homogeneous manifolds}, Comment. Math. Helv. \textbf{82} (2007), no. 1, 87--131.

\bibitem{DH09}  B.~Daniel, L.~Hauswirth, 
\newblock \textit{Half-space theorem, embedded minimal annuli and  minimal graphs in the Heisenberg space}, Proc. Lond. Math. Soc. (3) \textbf{98} (2009), no. 2, 445--470.

\bibitem{EO} J.~M.~Espinar, I.~S.~de Oliveira, 
\newblock \textit{Locally convex surfaces immersed in a Killing submersion}, Bull. Braz. Math. Soc. \textbf{44} (2013), no. 1, 155--171.
 
\bibitem{ER} J.~M.~Espinar, H.~Rosenberg, 
\newblock \textit{Complete constant mean curvature surfaces and Bernstein type theorems in $M^2\times  R$}, J. Differential Geom. \textbf{82} (2009), no. 3, 611--628.

\bibitem{FM07} I.~Fern\'{a}ndez, P.~Mira, 
\newblock \textit{Complete maximal surfaces in static Robertson-Walker $3$-space}, Gen. Relativity Gravitation, \textbf{39} (2007), no. 12,  2073--2077.

 \bibitem{FM09} I.~Fern\'{a}ndez, P.~Mira, 
\newblock \textit{Holomorphic quadratic differentials and the Bernstein problem in Heisenberg space}, Trans. Amer. Math. Soc. \textbf{361} (2009), no 11, 5737--5752.

\bibitem{JimGalMir} J.~A.~G\'{a}lvez, A.~Jim\'{e}nez, P.~Mira.
\newblock A correspondence for isometric immersions into product spaces and its applications,
\newblock J. Geom. Phys. \textbf{60} (2010), 1819--1832.
 
\bibitem{Har92}  S.~G.~Harris, 
\newblock \textit{Conformally stationary spacetimes}, Class. Quantum. Grav. \textbf{9}  (1992), no. 7, 1823--1827.

 \bibitem{He55} E.~Heinz, 
\newblock  \textit{\"{U}ber Fl\"{a}chen mit eineindeutiger Projektion auf eine Ebene, deren Kr\"{u}mmungen durch Ungleichungen eingeschr\"{a}nkt sind}, Math. Ann. \textbf{129} (1955), 451--454.

\bibitem{JS08}  M.~A.~Javaloyes, M.~S\'{a}nchez, 
\newblock \textit{A note on the existence of standard splittings for conformally stationary spacetimes}, Class. Quantum. Grav. \textbf{25} (2008), 168001, 7pp.

\bibitem{KKLSY} Y.~W.~Kim, S.~-E.~Koh, H.~-Y. Lee, H.~Shin, S.~-D.~Yang, \textit{Ruled minimal surfaces in the three dimensional Heisenberg group}, Pac. J. Math. \textbf{261} (2013) 477--496. 

\bibitem{KN} S.~Kobayashi, K.~Nomizu,
\newblock Foundations of Differential Geometry, Vol. II, Interscience, New York, 1969.

\bibitem{Lee11a} H.~Lee,  
\newblock \textit{Extension of the duality between minimal surfaces and maximal surfaces}, Geom. Dedicata, \textbf{151} (2011), 373--386.

\bibitem{Lee11b} H.~Lee, 
\newblock  \textit{Maximal surfaces in Lorentzian Heisenberg space}, Differ. Geom. Appl. \textbf{29} (2011), no 1, 73--84.

\bibitem{Lee12} H.~Lee, 
\newblock \textit{Minimal surface systems, maximal surface systems, and special Lagrangian equations}, Trans. Amer. Math. Soc. \textbf{365} (2013), no. 7, 3775--3797. 
 
\bibitem{LerMan} A.~Lerma, J.~M.~Manzano.
\newblock \textit{Compact stable surfaces with constant mean curvature in Killing submersions}, Ann. Mat. Pura. App. \textbf{196} (2017), no. 4, 1345--1364.

\bibitem{Man12b} J.~M.~Manzano, 
\newblock \textit{On the classification of Killing submersions and their isometries}, Pac. J. Math. \textbf{270} (2014), no. 2, 367--392. 

\bibitem{Mon88} S.~Montiel, 
\newblock \textit{Uniqueness of spacelike hypersurfaces of constant mean curvature in foliated spacetimes}, Math. Ann. \textbf{314} (1999), no. 3, 529 -- 553.

\bibitem{Pen12} C.~Pe\~{n}afiel, 
\newblock \textit{Graphs and multi-graphs in homogenous $3$-manifolds with isometry groups of dimension $4$}, Proc. Amer. Math. Soc. \textbf{140} (2012), no. 7, 2465--2478.

\bibitem{RR10b} A.~Romero, R.~M.~Rubio, 
\newblock \textit{On the mean curvature of spacelike surfaces in certain three-dimensional Robertson-Walker spacetimes and Calabi-Bernstein type problems}, Ann. Glob. Anal. Geom, \textbf{37} (2010), no. 1, 21--31.
 
\bibitem{Sa89}  I.~Salavessa, 
\newblock \textit{Graphs with parallel mean curvature},  Proc. Amer. Math. Soc. \textbf{107} (1989), no. 2, 449--458.

\bibitem{Tr82a} A.~Treibergs, 
\newblock \textit{Entire spacelike hypersurfaces of constant mean curvature in Minkowski space}, Invent. Math. \textbf{66} (1982), no. 1, 39--56.
\end{thebibliography}
\end{document}